\theoremstyle{definition}
\theoremstyle{theorem}
\newtheorem{thm}{Theorem}[section]
\newtheorem{lemma}[thm]{Lemma}
\newtheorem{conj}[thm]{Conjecture}
\theoremstyle{remark}
\newtheorem{claim}{Claim}
\newcommand{\cC}{\mathcal{C}}
\newcommand{\cH}{\mathcal{H}}
\DeclarePairedDelimiter\norm{\lVert}{\rVert}%
\DeclarePairedDelimiter\abs{\lvert}{\rvert}%
\let\oldabs\abs
\def\abs{\@ifstar{\oldabs}{\oldabs*}}
\let\oldnorm\norm
\def\norm{\@ifstar{\oldnorm}{\oldnorm*}}
\title{When are off-diagonal hypergraph Ramsey numbers polynomial?}
\author{David Conlon\thanks{Department of Mathematics, California Institute of Technology, Pasadena, CA 91125. Email: dconlon@caltech.edu. Research supported by NSF Awards DMS-2054452 and DMS-2348859.} \and
Jacob Fox\thanks{Department of Mathematics, Stanford University, Stanford, CA 94305. Email: jacobfox@stanford.edu. Research supported by NSF Award DMS-2154129.} \and
Benjamin Gunby\thanks{Department of Mathematics, Rutgers University, Piscataway, NJ 08854. Email: bg570@rutgers.edu.}\and
Xiaoyu He\thanks{School of Mathematics, Georgia Institute of Technology, Atlanta, GA 30332. Email: xhe399@gatech.edu. Research supported by NSF Award DMS-2103154.} \and
Dhruv Mubayi\thanks{Department of Mathematics, Statistics and Computer Science, University of Illinois, Chicago, IL 60607. Email: mubayi@uic.edu. Research supported by NSF Awards DMS-1763317,
DMS-1952767 and DMS-2153576, by a Humboldt Research Award and by a Simons Fellowship.} \and
Andrew Suk\thanks{Department of Mathematics, University of California at San Diego, La Jolla, CA 92093. Email: asuk@ucsd.edu. Research supported by an NSF
CAREER Award and by NSF Awards DMS-1952786 and DMS-2246847.} \and 
Jacques Verstra\"ete\thanks{Department of Mathematics, University of California at San Diego, La Jolla, CA 92093. Email: jacques@ucsd.edu. Research supported by NSF Award DMS-1800332.} \and
Hung-Hsun Hans Yu\thanks{Department of Mathematics, Princeton University, Princeton, NJ 08544. Email: hansonyu@princeton.edu. }}
\date{}
\begin{document}

\maketitle

\begin{abstract}
A natural open problem in Ramsey theory is to determine those $3$-graphs $H$ for which the off-diagonal Ramsey number $r(H, K_n^{(3)})$ grows polynomially with $n$. We make substantial progress on this question by showing that if $H$ is tightly connected or has at most two tight components, then $r(H, K_n^{(3)})$ grows polynomially if and only if $H$ is contained in an iterated blowup of an edge.
\end{abstract}

\section{Introduction}

Given a $k$-uniform hypergraph $H$ (henceforth, \textit{$k$-graph}), the \emph{off-diagonal Ramsey number} $r(H, K_n^{(k)})$ is the smallest natural number $N$ such that every red/blue-coloring of the edges of $K_N^{(k)}$, the complete $k$-graph with $N$ vertices, contains either a red copy of $H$ or a blue copy of $K_n^{(k)}$. For graphs, the $k = 2$ case, we know that $r(H, K_n)$ always grows polynomially with $n$ and the main problem is to determine the growth rate more precisely. 
This problem remains open even when $H$ is a clique, where the correct polynomial dependency is only understood for $K_3$ and $K_4$ --- for $K_3$ the off-diagonal Ramsey number was famously determined up to a constant factor by Ajtai, Koml\'os and Szemer\'edi~\cite{AKS} and Kim~\cite{Ki}, while for $K_4$ a recent result of Mattheus and Verstra\"ete~\cite{MaV} shows that $r(K_4, K_n) = n^{3+o(1)}$.
When $H$ is a cycle, even less is known and determining the polynomial order of $r(C_4,K_n)$ in $n$ is a major Erd\H{o}s problem (see \cite{BK, CLRZ00} for the best bounds on this problem and \cite{CMMV, MV} for recent progress on other cycle-complete Ramsey numbers).

For $3$-graphs $H$, $r(H, K_n^{(3)})$ does not always grow polynomially. Indeed, it is already the case~\cite{CFS} that $r(K_4^{(3)}, K_n^{(3)}) \geq 2^{\Omega(n\log n)}$. Much of the recent work on off-diagonal hypergraph Ramsey numbers has focused on extending this lower bound to other hypergraphs, so that it is now known that $r(H, K_n^{(3)}) \geq 2^{\Omega(n\log n)}$ for $K_4^{(3)}\setminus e$ and, more generally, all links of odd cycles~\cite{FH} and for all tight cycles of length not divisible by three~\cite{CFG+}. 

Here we look in a different direction, our concern being with the problem of classifying those $H$ for which $r(H, K_n^{(3)})$ grows polynomially in $n$. Although this problem has been studied since at least work of Erd\H{o}s and Hajnal~\cite{EH} in the early 1970s, it seems to have been first raised explicitly by the first author~\cite{CAIM} at an AIM workshop in 2015. We propose a full classification, as follows.

\begin{conj} \label{conj:main}
For a $3$-graph $H$, there exists a constant $c$ depending only on $H$ such that $r(H, K_n^{(3)}) \leq n^c$ for all $n$ if and only if $H$ is a subgraph of an iterated blowup of an edge. 
\end{conj}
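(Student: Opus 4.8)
The plan is to prove the two implications of the conjecture separately. Write $B_t$ for the $t$-fold iterated blowup of an edge, so that every $H$ contained in an iterated blowup lies in some $B_t$; call $H$ \emph{irreducible} if it lies in no $B_t$. The ``if'' direction $r(H,K_n^{(3)})\le n^c$ holds for every $H$ and is the more tractable half; it suffices to show $r(B_t,K_n^{(3)})\le n^{c(t)}$, which I would prove by induction on $t$. Given a $2$-colouring of $K_N^{(3)}$ with $N=n^{c(t)}$ and no blue $K_n^{(3)}$, the absence of a large blue clique is used to locate a red substructure rich enough to house three copies of $B_{t-1}$ together with all the (boundedly many) triples between them red; the inductive hypothesis then supplies a red $B_{t-1}$ at each location, and the three copies plus the crossing triples form a red $B_t$, using $B_{t-1}\subseteq B_t$. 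The technical point is to pin down ``rich enough'' --- some combination of a weak-regularity/density-increment step and a supersaturation count for $B_{t-1}$ inside a red-dense tripartite remnant --- but this direction is routine in spirit.

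The reverse implication is the real content: for irreducible $H$ one must construct, for superpolynomially large $N$, an $H$-free $3$-graph on $N$ vertices with independence number $N^{o(1)}$, equivalently a $2$-colouring of $K_N^{(3)}$ with no red $H$ and no blue $K_{N^{o(1)}}^{(3)}$. First I would reduce the number of tight components. Being contained in an iterated blowup is closed under passing to subgraphs and, since $B_{t-1}\subseteq B_t$ and $B_t\sqcup B_t\subseteq B_{t+1}$, under disjoint unions; combined with the monotonicity $r(H',K_n^{(3)})\le r(H,K_n^{(3)})$ for $H'\subseteq H$, this reduces the problem to the tightly connected case \emph{whenever $H$ has an irreducible tight component}. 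The only remaining case is that every tight component of $H$ is reducible but $H$ itself is not, which can happen only because the components are forced to share vertices; this is precisely where restricting to at most two tight components is the natural first target, since one then has to analyse only how two reducible pieces fail to embed compatibly into a common blowup.

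For a tightly connected irreducible $H$ the plan is to build the colouring on the leaves of a depth-$d$ ternary tree, in the spirit of the Conlon--Fox--Sudakov construction for $K_4^{(3)}$~\cite{CFS} and the tight-cycle constructions of~\cite{CFG+}: colour red every ``rainbow'' triple (one leaf in each of the three subtrees at the triple's lowest common ancestor --- these are exactly the edges of $B_d$), and colour each remaining ``$2+1$'' triple by an auxiliary rule built from random tournaments. The rule must be designed so that (i) the extra red triples destroy every large ``binary subtree'', forcing the blue clique number down to $N^{o(1)}$ with $N=3^d$, while (ii) they stay structured enough that no red copy of $H$ appears. Property (ii) is plausible because irreducibility forces any copy of $H$ embedded in the tree to use at least one $2+1$ triple, and tight connectedness prevents such an edge from sitting harmlessly apart from the rest of the copy; but certifying (ii) is delicate, since the red $3$-graph strictly contains $B_d$ and therefore contains all reducible $3$-graphs of bounded size.

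The hard part --- and the reason the full conjecture remains open --- is uniformity over all irreducible $H$. Irreducibility appears to have no finite forbidden-subhypergraph characterisation, and the known superpolynomial lower bounds are bespoke to particular families (cliques, tight cycles of length not divisible by three, links of odd cycles). So what is really needed is either a structural dichotomy asserting that every tightly connected irreducible $H$ contains one of a bounded list of ``universally bad'' configurations whose Ramsey numbers are already known to be superpolynomial, or a single construction whose $H$-freeness can be read off directly from the recursive way in which $H$ fails to reduce. Handling three or more tight components brings the further burden of controlling how several reducible pieces can be glued along shared vertices into an irreducible whole, a bookkeeping problem whose difficulty grows with the number of components; the tightly connected and two-component cases are where this analysis can currently be pushed through.
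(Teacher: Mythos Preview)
The statement is Conjecture~\ref{conj:main}, which the paper does \emph{not} prove: the ``if'' direction is attributed to Erd\H{o}s and Hajnal~\cite{EH}, and toward ``only if'' the paper establishes only the tightly connected case (Theorem~\ref{thm:tight}) and the case of at most two tight components (Theorem~\ref{thm:two-components}). Your submission is, as you say yourself, a plan rather than a proof, and you explicitly concede that the full conjecture remains open; so there is no completed argument here to set against a proof that does not exist. Your reduction is sound --- if some tight component of $H$ is non-tripartite one passes to it, and otherwise every component is tripartite and the obstruction to iterated tripartiteness comes from shared vertices --- and this is exactly the case split the paper works with.

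Where the paper does prove something, its constructions differ substantially from your sketch. For tightly connected non-tripartite $H$ the paper does not use a ternary tree with auxiliary tournament rules; it proves the uniform statement (Theorem~\ref{thm:all-tight}) that a single colouring exists in which \emph{every} red tight component is tripartite and the blue clique number is $O((\log N)^{3/2})$. The vertex set is an $r$-trifference code in $\{1,2,3\}^\ell$, a random coordinate $\phi(uv)$ on which $u,v$ differ is chosen for each pair, and a triple is red exactly when the three $\phi$-values coincide; tripartiteness of red tight components is then immediate from the shared coordinate, and the blue bound follows from Suen's inequality, giving $2^{\Omega(n^{2/3})}$ and bypassing the ``certifying~(ii)'' issue you flag. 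For two components the paper again gives a single colouring good for the whole family (Theorem~\ref{thm:all-two-components}), built from an auxiliary edge-colouring of $K_N$ whose colour classes are disjoint unions of bicliques and which has many rainbow triangles in every large set (Lemma~\ref{lem:rainbow-coloring}); this is rather far from anything in your outline. Finally, note the paper's concluding remark that no colouring can have \emph{all} red subgraphs iterated tripartite while keeping the blue clique at $N^{o(1)}$, so even in principle the universal-construction route you gesture at in the last paragraph cannot settle the conjecture wholesale.
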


To clarify what we mean by an iterated blowup, we first note that a \emph{blowup of an edge} is simply a complete tripartite $3$-graph. An \emph{iterated blowup of an edge} is then any graph which is either a blowup of an edge or formed iteratively by placing another iterated blowup in one or more of the parts in a blowup of an edge. In what follows, we say that a $3$-graph is \emph{iterated tripartite} if it is contained in an iterated blowup of an edge. 

The direction of Conjecture~\ref{conj:main} saying that there exists a constant $c$ such that $r(H, K_n^{(3)}) \leq n^c$ if $H$ is iterated tripartite was already known to Erd\H{o}s and Hajnal~\cite{EH} (see also~\cite{CFS, FH}). Their concern was with a slightly different Ramsey-type question.
For natural numbers $4 \le s < n$ and $2 \le t \le \binom{s}{3}$, they were interested in the Ramsey function $r_3(s, t; n)$, the smallest natural number $N$ such that every red/blue-coloring of the edges of $K_N^{(3)}$ contains either a blue copy of $K_n^{(3)}$ or a set of $s$ vertices with at least $t$ red edges. Regarding the behavior of this function, they conjectured that there should be a polynomial-to-exponential transition for the growth rate of $r_3(s, t; n)$ at $t = t(s)$, the maximum number of edges in an iterated tripartite $3$-graph with $s$ vertices. Their conjecture may be seen as a toy model for our 
Conjecture~\ref{conj:main}. 

In their paper, Erd\H{o}s and Hajnal~\cite{EH} proved one direction of their conjecture, showing that $r_3(s, t; n)$ grows polynomially in $n$ for $t \leq t(s)$. Their proof, as indicated above, further shows one direction of our Conjecture~\ref{conj:main}. However, it remains open to show that $r_3(s, t; n)$ grows exponentially in a power of $n$ for $t > t(s)$. Some partial results, saying, for instance, that the conjecture holds when $s$ is a power of $3$, were proven by Conlon, Fox and Sudakov~\cite{CFS}, while the analogous problem in higher uniformities was solved completely by Mubayi and Razborov~\cite{MRa}. In particular, we see that Conjecture~\ref{conj:main} holds if $|H|$ is a power of $3$ and the number of edges in $H$ is larger than that in any iterated blowup with $|H|$ vertices. 

In light of these results, it is perhaps surprising that Conjecture~\ref{conj:main} has not been stated before. One reason for this was the common belief in the community that $r(H, K_n^{(3)})$ should also be polynomial for linear $H$, where a hypergraph is said to be \emph{linear} if any two edges in the hypergraph share at most one vertex. However, it was recently shown~\cite{CFG+} that this belief is mistaken and that there are linear hypergraphs for which $r(H, K_n^{(3)})$ grows superpolynomially. Conjecture~\ref{conj:main} is stronger again, suggesting that linearity is a red herring in this context.

Towards Conjecture~\ref{conj:main}, we prove two results. We say that a $3$-graph $H$ is \emph{tightly connected} if, for any two edges $e$ and $f$ of $H$ there exists a sequence of edges $e = e_0, e_1, \dots, e_t = f$ such that $e_{i-1}$ and $e_i$ share two vertices for all $i = 1, \dots, t$. Our first result says that Conjecture~\ref{conj:main} holds for tightly connected hypergraphs, even giving a lower bound in this case which is exponential in a power of $n$.

\begin{thm} \label{thm:tight}
If $H$ is a $3$-graph which is tightly connected and not tripartite, then $r(H, K_n^{(3)}) \ge 2^{\Omega(n^{2/3})}$.
\end{thm}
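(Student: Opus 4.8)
The plan is to construct, for $N = 2^{cn^{2/3}}$ with a suitable $c = c(H) > 0$, a red/blue colouring of $K_N^{(3)}$ with no blue $K_n^{(3)}$ and no red $H$, which gives $r(H,K_n^{(3)}) > N$. Write $h = |V(H)|$. Since a $3$-graph is tripartite exactly when each of its tight components is, a $3$-graph $R$ is automatically $H$-free as soon as it is \emph{$h$-locally tripartite}, i.e.\ $R[S]$ is tripartite whenever $|S| \le h$ --- any copy of $H$ in $R$ would be a tightly connected, non-tripartite subgraph on $h$ vertices. So it suffices to construct, on $N = 2^{\Omega(n^{2/3})}$ vertices, an $h$-locally tripartite $3$-graph $R$ with independence number $\alpha(R) < n$, and then colour the triples of $R$ red and all other triples blue. (It is worth noting that, for tightly connected $H$, being non-tripartite is equivalent to not being contained in any iterated blowup of an edge, so Theorem~\ref{thm:tight} is exactly the quantitative forward direction of Conjecture~\ref{conj:main} in the tightly connected case; one may also replace $H$ by a minimal tightly connected non-tripartite subgraph of itself, since $r(\cdot,K_n^{(3)})$ is monotone under subgraphs, though this is not essential.)

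The colouring should be built recursively on a tree of bounded branching --- at least $3$, to leave room for tripartiteness. At the top level one partitions the vertex set into parts $V_1,\dots,V_m$ carrying a fixed assignment of ``colours'' from $\{1,2,3\}$, colours the triples inside each $V_i$ recursively, and colours the triples meeting two or three parts by a rule under which every red such triple is compatible with this colour assignment together with the finer assignments one level down. The reason there is no red $H$: a red copy of $H$ either lies entirely inside a single $V_i$, in which case induction on the recursion depth applies, or, because $H$ is tightly connected, it cannot straddle the parts except in a manner globally consistent with the nested colour structure, and then that structure induces a proper $3$-colouring of $V(H)$ --- contradicting that $H$ is not tripartite. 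This is precisely where tight connectedness is used: it rules out a red copy of $H$ spread across the parts in a way that mixes incompatible scales, which is how a non-tripartite tight configuration could otherwise arise.

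Bounding $\alpha(R)$ is the crux, and it is where two opposing pressures collide. If only the ``fully transversal'' triples were coloured red then any independent set would be confined essentially to the union of two of the three top colour classes, but such a rule both leaves independent sets of size $N^{\Omega(1)}$ and fails $h$-local tripartiteness, since red transversal triples from two different recursion levels can combine into a non-tripartite configuration on few vertices. The cross rule must therefore be made selective --- thinning the red transversal triples and adding other red triples in a controlled way --- so as to simultaneously keep $R$ $h$-locally tripartite and force every recursion level to shrink the largest independent set by a definite amount; summing these gains over the $\Theta(\log N)$ levels, and accounting for the fact that the sparsity needed for $h$-local tripartiteness prevents the per-level gain from being $O(1)$, should yield $\alpha(R) = O\big((\log N)^{3/2}\big)$, hence $N = 2^{\Omega(n^{2/3})}$ upon taking $\alpha(R) < n$. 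The main obstacle is exactly this balancing act --- designing the cross rule sparse enough that no red tightly connected non-tripartite subgraph on at most $h$ vertices is ever created, through any of the finitely many ways a bounded configuration can spread across recursion levels, yet rich enough to destroy every independent set of size $n$ --- and the friction between these two demands is what is expected to produce the exponent $2/3$ in place of the $n\log n$ known for $K_4^{(3)}$.
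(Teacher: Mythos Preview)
Your proposal is not a proof but a strategy outline: you never actually specify the ``cross rule,'' never construct $R$, and you yourself name the central obstacle (``designing the cross rule sparse enough \ldots\ yet rich enough'') without resolving it. The sentence ``a $3$-graph is tripartite exactly when each of its tight components is'' is also false (the Fano plane is a counterexample), though fortunately you do not actually use that direction --- $h$-local tripartiteness does imply $H$-freeness for tightly connected non-tripartite $H$, for the reason you give.

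More to the point, the paper's construction is not recursive at all and bears little resemblance to your sketch. The paper takes $\ell = C\log N$, picks an $r$-trifference code $V \subseteq \{1,2,3\}^\ell$ of size $N$ (so every triple in $V$ has $\Theta(\ell)$ coordinates where all three symbols appear), chooses for each pair $uv$ a uniformly random coordinate $\phi(uv)$ on which $u,v$ differ, and colours a triple $uvw$ red iff $\phi(uv)=\phi(vw)=\phi(uw)$. Any red tight component then has a common $\phi$-value $i$, and the $i$-th coordinate gives a global tripartition --- so every red tight component is tripartite, which is stronger than the $h$-local tripartiteness you aim for and requires no recursion. The independence-number bound comes from Suen's inequality: for a fixed $n$-set with $n = \ell^{3/2}$ the expected number of red triples is $\Theta(n^3/\ell^2) = \Theta(\ell^{5/2})$, pairs of overlapping red-triple events contribute only $O(n^4/\ell^4) = O(\ell^2)$, and the resulting bound $e^{-\Theta(\ell^{5/2})}$ beats the union bound over $N^n$ sets. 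The $2/3$ exponent thus arises from balancing $\mu = \Theta(n^3/\ell^2)$ against $n\log N = n\ell$, not from any per-level shrinkage in a hierarchy.
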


One objection towards this being strong evidence for Conjecture~\ref{conj:main} is that a tightly connected hypergraph is almost exactly the opposite of a linear hypergraph. Our second result goes some small way towards overruling this objection. We define a \textit{tight component} of $H$ to be a maximal tightly connected subgraph and observe that the edge set of every $3$-graph decomposes into disjoint tight components. We prove that Conjecture~\ref{conj:main} holds for hypergraphs which have at most two tight components, though our lower bound is considerably weaker in this case.

\begin{thm}\label{thm:two-components}
    If $H$ is a $3$-graph with at most two tight components and not iterated tripartite, then $r(H,K_n^{(3)}) \ge 2^{\Omega(\log^2 n)}$.
\end{thm}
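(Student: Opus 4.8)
The plan is to leverage Theorem~\ref{thm:tight} as a black box wherever possible and to reduce the two-component case to the tightly connected case plus a product-type construction. Write $H = H_1 \cup H_2$, where $H_1, H_2$ are the (at most two) tight components, and suppose $H$ is not iterated tripartite. First I would dispose of the easy subcases: if some $H_i$ is itself not tripartite, then $H \supseteq H_i$ is tightly connected and not tripartite, so Theorem~\ref{thm:tight} already gives $r(H, K_n^{(3)}) \ge 2^{\Omega(n^{2/3})}$, which is far stronger than the claimed bound. Hence I may assume both $H_1$ and $H_2$ are tripartite (complete tripartite, after passing to a subgraph — note $H$ is a subgraph of the union of the complete tripartite closures of its components). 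The remaining obstruction is then combinatorial: two tripartite $3$-graphs whose union fails to be iterated tripartite. The key structural claim I would isolate is that in this situation one can find, inside $H$, a small ``bad pattern'' $H_0$ on a bounded number of vertices — again with two tripartite tight components — that already fails to be iterated tripartite; it suffices to prove the lower bound for $H_0$, since $r(H, K_n^{(3)}) \ge r(H_0, K_n^{(3)})$ when $H_0 \subseteq H$.

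The heart of the argument is then a coloring construction showing $r(H_0, K_n^{(3)}) \ge 2^{\Omega(\log^2 n)}$, i.e. that on $N = 2^{c\log^2 n}$ vertices there is a red/blue coloring of $K_N^{(3)}$ with no blue $K_n^{(3)}$ and no red $H_0$. The natural vehicle is an \emph{iterated/recursive product construction}: partition the vertex set into $m$ blocks, color all ``crossing'' triples (those meeting at least two blocks) according to a bounded pattern — essentially a coloring of a complete tripartite-type structure on the blocks designed so that no red copy of a tight component of $H_0$ can be ``spread across'' blocks — and recurse on each block. If each level multiplies the number of vertices by a factor $m$ and forces any blue clique to lose a constant fraction of its vertices (or: forces a red tight-component-worth of structure), then after $L$ levels we get $N = m^L$ vertices avoiding blue $K_n^{(3)}$ provided $L \approx \log n$, hence $N = m^{\Theta(\log n)} = 2^{\Theta(\log m \cdot \log n)}$; choosing the block number $m$ to itself grow like $2^{\Theta(\log n)}$ (or iterating the recursion a second time) upgrades this to $2^{\Omega(\log^2 n)}$. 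The delicate point is arranging the crossing-coloring so that a red copy of $H_0$ is impossible: since $H_0$ has two tight components, a red $H_0$ would have to place each component either entirely within one block or genuinely across blocks, and the crossing pattern must be chosen — using that $H_0$ is not iterated tripartite, so the two components cannot be ``aligned'' with any tripartition — to rule out every placement. I would design the crossing coloring from a generic tripartition-coloring on the blocks combined with an auxiliary $2$-coloring recording block indices, so that a crossing red tight component is forced into a tripartite pattern that $H_0$'s non-iterated-tripartiteness forbids.

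The main obstacle I anticipate is exactly this last design step: proving that the recursive coloring admits no red $H_0$ requires a careful case analysis of how the vertices of $H_0$ can distribute among the blocks at each recursion level, and one must check that in \emph{every} distribution some edge of $H_0$ is forced to be blue. This is where the hypothesis ``at most two tight components'' is essential — with two components one only has to handle the four combinations (both inside one block, one inside/one across, one across/one inside, both across, with the across cases further split by which blocks), whereas with three or more components the bookkeeping explodes and the construction as stated would not obviously work. A secondary obstacle is extracting the bounded bad pattern $H_0$: one needs a clean combinatorial lemma characterizing when a union of two complete tripartite $3$-graphs is iterated tripartite, and showing that failure is witnessed on boundedly many vertices. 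I would prove this by analyzing the ``conflict'' between the two tripartitions — roughly, iterated tripartiteness of $H_1 \cup H_2$ should correspond to one tripartition refining the block structure induced by the other, and a minimal violation of this refinement condition lives on a bounded set of vertices. Once $H_0$ is in hand, the recursive construction and the $2^{\Omega(\log^2 n)}$ bookkeeping are routine.
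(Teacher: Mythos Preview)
Your reduction to the case where both tight components $H_1,H_2$ are individually tripartite is fine, but from that point on the proposal diverges from the paper and has a real gap at exactly the step you yourself flag as ``the main obstacle.'' The paper neither reduces to a bounded bad pattern $H_0$ nor uses an iterated block/product construction. Instead it proves a single universal statement (Theorem~\ref{thm:all-two-components}): there is one red/blue coloring of $K_N^{(3)}$ in which \emph{every} red subgraph with at most two tight components is iterated tripartite, while the largest blue clique has size $\exp(O(\sqrt{\log N}))$. The coloring is built from an auxiliary edge coloring $\varphi:E(K_N)\to[\ell A]$ with two engineered properties (Lemma~\ref{lem:rainbow-coloring}, proved via a nontrivial binary-tree weight lemma): each color class of $\varphi$ is a vertex-disjoint union of bicliques, and every $k$-vertex set with $k=\exp(\Omega(\log N/\log A))$ contains $\Omega(k^{2.5})$ $\varphi$-rainbow triangles. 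A triple $uvw$ is then declared red only when it is rainbow under $\varphi$ and several independent random functions $g,f_1,f_2,f_3$ on pairs agree with $\varphi$ in a prescribed pattern. The rainbow-triangle count handles the blue-clique bound; a structural lemma shows that every red tight component carries a unique tripartition whose cross-edges are monochromatic under $\varphi$; and finally the \emph{biclique} property of $\varphi$ is precisely what forces any two red tight components whose union is not tripartite to have iterated tripartite union.

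The gap in your plan is that the ``crossing coloring'' is never constructed, and the parameters do not close. To reach $2^{\Omega(\log^2 n)}$ you want $m=n^{\Theta(1)}$ blocks per level and need a blue clique to be confined to $O(1)$ blocks at each level (``lose a constant fraction'' does not achieve this once $m$ grows); that forces the crossing coloring to be a Ramsey coloring on $m$ super-vertices with no blue $K_{O(1)}^{(3)}$, hence extremely red. You must then argue that this abundance of crossing red edges can never combine with red structure inside a block to produce $H_0$. But each tripartite component $H_i$ has polynomial Ramsey number, so the interior of every block will generically contain many red copies of each $H_i$ individually; recursing only on ``no red $H_0$'' gives you no control over red $H_1$ or red $H_2$ inside blocks, and the case analysis you sketch (inside/across for each component) does not rule out one component sitting inside a block and the other sitting across. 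Resolving exactly this interaction is the content of the paper's proof, and it is achieved not by recursion or by tailoring to a finite $H_0$ but by the biclique constraint on $\varphi$ together with the auxiliary random functions --- that is the missing idea.
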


The proof of Theorem~\ref{thm:two-components} is significantly more technical than that of Theorem~\ref{thm:tight} and may be considered our main result. With that in mind, we will warm up by proving Theorem~\ref{thm:tight} in the next section, before returning to Theorem~\ref{thm:two-components} in Section~\ref{sec:two-components}. We then conclude with some further remarks and open problems.

\section{Tightly connected hypergraphs} \label{sec:tight}

In this short section, we prove Theorem~\ref{thm:tight}, the statement that if $H$ is tightly connected and not tripartite, then $r(H, K_n^{(3)}) \ge 2^{\Omega(n^{2/3})}$.
In fact, we will  prove a stronger statement, from which \cref{thm:tight} clearly follows.

\begin{thm}\label{thm:all-tight}
    For every positive integer $N$, there is a red/blue edge coloring of $K_N^{(3)}$ vertices such that any red tightly connected subgraph is tripartite and the largest blue clique has order $O((\log N)^{3/2})$. 
\end{thm}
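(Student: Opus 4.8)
The plan is to construct an explicit coloring via a "stepping-up" / iterated product construction, where the key invariant is that red tightly connected subgraphs are forced to be tripartite. First I would build the coloring recursively: given a good coloring $\chi_m$ of $K_{N_m}^{(3)}$, produce a coloring of $K_{N_{m+1}}^{(3)}$ on roughly $N_{m+1} = N_m^{\Theta(1)}$ vertices by taking a ground set partitioned into blocks indexed by the vertices of the "outer" level, coloring a triple according to the pattern of which blocks its vertices fall into. When all three vertices lie in distinct outer blocks, we copy a fixed "base" coloring on the outer index set; when two or three vertices share an outer block, we recurse into $\chi_m$ on that block. One natural base coloring to seed with is one on $O(1)$ or $\mathrm{poly}(\log)$ vertices in which every red tight component is tripartite and blue cliques are tiny — for instance the obvious coloring coming from a generic ordering (color $\{x<y<z\}$ red or blue according to some ternary-digit rule on the positions), which is the standard Erdős–Hajnal-type construction underlying the known upper bound for iterated tripartite $H$.

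The central structural claim to verify is: \emph{if $S$ is a set of vertices spanning a red tightly connected subgraph in $\chi_{m+1}$, then $S$ is tripartite}. The point is that tight connectivity is "local" — any two edges sharing two vertices must have a consistent color pattern — so a red tight component cannot mix the "inter-block" regime and the "intra-block" regime arbitrarily. I would argue that either $S$ lives essentially inside a bounded number of outer blocks (and then tripartiteness follows from the inductive hypothesis applied to each block, together with the fact that the cross-block edges, being colored by the base pattern, are themselves forced into a tripartite skeleton), or $S$ spreads across many outer blocks, in which case the projection of $S$ to the outer index set spans a red tight component there and hence is tripartite by induction — and one then lifts a tripartition of the outer index set to a tripartition of $S$, checking that intra-block red edges are compatible with whichever part the block was assigned. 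The blue-clique bound is comparatively routine: a blue clique can be large in at most one "regime" at each level, so its size satisfies a recursion of the form $f(N_{m+1}) \le f(N_m) + O(\log N_m)$ (or a similar additive recursion), which telescopes to $O((\log N)^{3/2})$ once one tracks how $\log N_m$ grows under $N_{m+1} = N_m^{\Theta(1)}$ — the exponent $3/2$ should emerge from summing $\sqrt{\log N_i}$-type terms across the $\Theta(\log\log N)$ levels.

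The main obstacle I expect is proving the structural claim cleanly, specifically handling the "mixed" case where a red tight component uses both cross-block edges and within-block edges. The subtlety is that the tight-connectivity sequence $e_0, e_1, \dots, e_t$ can pass back and forth between blocks, and one must show the tripartitions produced at different levels glue consistently along the shared pairs of vertices — i.e., that the assignment of a vertex to a class of the tripartition is well-defined regardless of which edge of the tight walk one uses to read it off. This likely requires choosing the base coloring and the recursion so that the tripartition is canonical (say, determined by a fixed $3$-coloring of vertices that is respected by every red edge), rather than merely existing; getting that canonicity to be preserved under the blowup operation is the technical heart of the argument. A secondary, milder obstacle is optimizing constants so that the parameters $N_{m+1} = N_m^{O(1)}$ are compatible with the target bound $O((\log N)^{3/2})$ rather than something weaker like $O(\log^2 N)$; this constrains how aggressively one is allowed to blow up at each step.
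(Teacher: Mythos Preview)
Your plan is genuinely different from the paper's proof and, as written, has real gaps. The paper uses no recursion or stepping-up at all: it takes the vertex set to be an $r$-trifference code $V\subseteq\{1,2,3\}^\ell$ of size $N$ with $\ell=\Theta(\log N)$, assigns to each pair $uv$ a uniformly random coordinate $\phi(uv)$ among those where $u$ and $v$ differ, and colors a triple red iff all three $\phi$-values coincide. A red tight component then has a single common value $i$ on its entire $2$-shadow, and the $i$-th coordinate of the code gives a canonical tripartition immediately --- no gluing argument is needed. The blue-clique bound $(\log N)^{3/2}$ comes from Suen's inequality applied to the events ``triple $uvw$ is red'' inside a candidate $n$-set, balancing $\mu=\Theta(n^3/\ell^2)$ against $\log\binom{N}{n}$; the exponent $3/2$ arises from setting $n=\ell^{3/2}$ to make this balance work.

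Your quantitative analysis does not hold together. The recursion $f(N_{m+1})\le f(N_m)+O(\log N_m)$ under $N_{m+1}=N_m^{\Theta(1)}$ telescopes to a geometric sum of order $O(\log N)$, while your alternative phrasing ``summing $\sqrt{\log N_i}$-type terms across $\Theta(\log\log N)$ levels'' yields at most $\sqrt{\log N}\cdot\log\log N$; neither of these is $(\log N)^{3/2}$, and no additive recursion of this shape with polynomial blowup naturally produces that exponent. More seriously, you identify but do not resolve the structural obstacle --- making the tripartition canonical and consistent when a red tight walk passes between intra-block and cross-block edges. In the paper's construction this difficulty simply does not arise, because the tripartition is globally determined by a single coordinate of the ambient code; absent a comparable mechanism, the inductive step in your plan remains unproved.
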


\begin{proof}[Proof of Theorem~\ref{thm:all-tight}] Let $\ell= C\log N$, where $C>0$ is a sufficiently large absolute constant, and $n = \ell^{3/2}$. Let $r=0.01\ell$ and let $V$ be an $r$-trifference code in $\{1, 2, 3\}^\ell$ of size $N$. What this means is that $V$ is a subset of $\{1, 2, 3\}^\ell$ of size $N$ with the property that, for any triple  $uvw$ of elements of $V$, there are at least $r$ coordinates where $\{u_i, v_i, w_i\} = \{1, 2, 3\}$. Such a set of size $N$ can be found by the first moment method, as it is exponentially unlikely that a random triple $uvw$ is not $r$-trifferent.

    We will define our Ramsey coloring on the complete $3$-graph with vertex set $V$. Let $c(uv)$ denote the set of coordinates in which $u,v$ differ. By the definition of $V$, $r\le |c(uv)| \le \ell$ for all $u,v$. Define $\phi(uv)$ to be an element of $c(uv)$ picked uniformly at random. The Ramsey coloring of $K_N^{(3)}$ will be $\chi$ where $\chi(uvw)$ is red if and only if $\phi(uv)=\phi(vw)=\phi(uw)$. Observe that, by definition, any red triple $uvw$ automatically satisfies $\{u_i, v_i, w_i\} = \{1, 2, 3\}$ for $i=\phi(uv)$.

    \medskip
    \noindent
    \textbf{Red tight components are tripartite.} For any red tight component, all pairs in its $2$-shadow must share the same $\phi$-value $i$. Then the $i$-th coordinates of the vertices give a tripartition of the vertices, showing that the component is tripartite.

    \medskip
    \noindent
    \textbf{No blue $K_n^{(3)}$ with positive probability.} Fix a set $U$ of $n$ vertices in $V$; we would like to bound the probability that this $n$-set forms a blue clique. Let $T_{uvw}$ be the event that the triple $uvw$ is red, that is, that the triangle $uvw$ is monochromatic under $\phi$.
    Each event has probability 
    \[
    \Pr[T_{uvw}] = \frac{|c(uv)\cap c(vw)\cap c(uw)|}{|c(uv)|\cdot |c(vw)|\cdot |c(uw)|} = \Theta(\ell^{-2})
    \]
    by our choice of $V$ and $c(\cdot)$.
    Since there are $\Theta(n^3)$ total events, the expected number of monochromatic triangles in $U$ is $\mu = \Theta(n^3/ \ell^2) = \Theta(\ell^{5/2})$. The Poisson paradigm predicts that
    \begin{equation}\label{eq:poisson}
    \Pr[\bigwedge \overline T_{uvw}] \le e^{-\Theta(\ell^{5/2})}.
    \end{equation}
    We first show that this would suffice to prove the theorem. Indeed, since there are at most $N^n$ $n$-sets and
    \[
    N^n e^{-\Theta(\ell^{5/2})} < 1
    \]
    by our choices of $n, \ell$ and $N$, the union bound gives that there is a positive probability no blue $n$-cliques appear.

    Now we prove \eqref{eq:poisson} via Suen's inequality in the form given in Alon--Spencer~\cite[Theorem 8.7.1]{AlSp}, which implies that 
    \begin{equation}\label{eq:suen}
    \Pr[\bigwedge \overline{T_{\tau}} ] \le e^{-\mu + \sum_{\tau\sim \tau'} y(\tau, \tau')},
    \end{equation}
    where $\tau$ ranges over all triangles $uvw$ in $U$ and $\tau \sim \tau'$ if these two triangles share at least one edge, with
    \[
    y(\tau, \tau') = (\Pr[T_{\tau} \wedge T_{\tau'}] + \Pr[T_{\tau}]\cdot \Pr[T_{\tau'}])\prod_{\tau'' \sim \tau \textnormal{ or }\tau'' \sim \tau'} (1- \Pr[T_{\tau''}])^{-1}.
    \]
    Observe that if $\tau$ and $\tau'$ share an edge, then $\Pr[T_{\tau} \wedge T_{\tau'}] = O(\ell^{-4})$ and $\Pr[T_{\tau}]\cdot \Pr[T_{\tau'}] = O(\ell^{-4})$. Moreover, there are $\Theta(n)$ triangles $\tau''$ that share an edge with either $\tau$ or $\tau'$, so we obtain
    \[
    y(\tau,\tau') =O(\ell^{-4}) (1-\Theta(\ell^{-2}))^{-\Theta(n)}=O(\ell^{-4}) e^{\Theta(n/\ell^2)} = O(\ell^{-4}),
    \]
    where we used that $n=o(\ell^2)$. Since there are $O(n^4)$ pairs of triangles $\tau,\tau'$ that share an edge, \eqref{eq:suen} reduces to
    \[
    \Pr[\bigwedge \overline{T_\tau}] \le e^{-\mu + O(n^4/\ell^4)} = e^{-\Theta(\ell^{5/2}) + O(\ell^2)} = e^{-\Theta(\ell^{5/2})},
    \]
    as desired.
\end{proof}

Before moving on to the case with two tight components, we sketch an alternative, but quantitatively weaker, construction for the single component case.
Let $\ell = C\log N$ and $n=C^{7/4} \log^2 N$ for $C$ a sufficiently large constant. For each edge $uv$ of the complete graph on $V = [N]$, label it with some $\phi(uv) \in \{1,2,..., \ell\}$ chosen independently and uniformly at random. Moreover, assign a string $f(v) \in \{1,2,3\}^\ell$ to each vertex $v$, again chosen independently and uniformly at random. The Ramsey coloring of $K_N^{(3)}$ will be $\chi$ where $\chi(uvw)$ is red if and only if
\begin{enumerate}
    \item $\phi(uv)=\phi(vw)=\phi(uw) = i$ for some $i$,
    \item $f_i(u), f_i(v)$ and $f_i(w)$ are all distinct for this $i$
\end{enumerate}
and blue otherwise. If there is any red tightly connected $3$-graph in $\chi$, then all edges in its $2$-shadow have the same $\phi$-value $i$, so the $f_i(v)$ yield a tripartition. To show that there is no blue $K_n$ with positive probability, we use an application of Janson's inequality to show that, with positive probability, for every subset $U$ of $V$ with $n$ vertices and every $i \in \{1,2,..., \ell\}$ there are at least $n/6$ vertex-disjoint triangles in $U$ which are monochromatic of color $i$ under $\phi$. If we now bring the randomness of $f$ into play, then each such monochromatic triangle forms a blue edge with probability $7/9$. As the randomness for each coordinate $f_i$ of $f$ is independent, the induced hypergraph on any given $n$-vertex set $U$ is completely blue with probability at most $(7/9)^{\frac{n}{6} \cdot \ell} = \exp(-\Theta(n\ell))$. Since there are ${N \choose n} = \exp(O(n\log N))$ subsets to consider, the union bound and the fact that $\ell$ dominates $\log N$ completes the analysis, yielding the bound $r(H, K_n^{(3)}) \geq 2^{\Omega(n^{1/2})}$ for every non-tripartite tightly connected $3$-graph $H$.

\section{Two tight components} \label{sec:two-components}

In this section, we prove Theorem~\ref{thm:two-components}, which says that if $H$ has at most two tight components and is not iterated tripartite, then $r(H,K_n^{(3)}) \ge 2^{\Omega(\log^2 n)}$.
We will once again prove \cref{thm:two-components} by providing a single construction avoiding all such $H$ in red simultaneously.

\begin{thm}\label{thm:all-two-components}
    For every positive integer $N$, there is a red/blue edge coloring of $K_N^{(3)}$ such that any red subgraph with at most two tight components is iterated tripartite and the largest blue clique has order $\exp(O(\sqrt{\log N}))$. 
\end{thm}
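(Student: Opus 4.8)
The plan is to build on the single-component construction from Theorem~\ref{thm:all-tight} but make it ``recursive'' so that subgraphs with two tight components are forced to be iterated tripartite rather than merely tripartite. The natural idea is to set up a hierarchical coloring: at the top level we use a trifference-type coloring as in Theorem~\ref{thm:all-tight}, but whenever a pair of vertices agrees in all the ``relevant'' coordinates — i.e. when they could conceivably lie in the same part of the top-level tripartition — we recurse, coloring the triples among such vertices by an independent copy of the same construction on a smaller ground set. Concretely, I would take $V \subseteq \{1,2,3\}^\ell$ to be a trifference code, pick random $\phi(uv) \in c(uv)$ as before, and declare $\chi(uvw)$ red at ``level $1$'' if $\phi(uv) = \phi(vw) = \phi(uw)$; but if instead some pair, say $uv$, has no valid level-$1$ separating coordinate chosen (or more precisely the triple fails the level-$1$ red test because two of the vertices share the same symbol in the chosen coordinate), then the color of $uvw$ is determined by a lower-level coloring $\chi'$ defined on the ``cell'' of vertices agreeing on a prefix. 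The key structural claim to verify is that any red subgraph with two tight components $H_1, H_2$ must have each $H_i$ tripartite in a way compatible with the hierarchy — so that $H$ embeds into an iterated blowup — because the two components can interact through at most ``one level of descent'' in the hierarchy.

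The main steps, in order, would be: (1) define the $L$-level recursive coloring, where $L = \Theta(\sqrt{\log N})$ and each level shrinks the ground set by a controlled factor, so that the total depth times per-level blue-clique bound gives the claimed $\exp(O(\sqrt{\log N}))$ bound on the blue clique number — here the per-level construction contributes an $O((\log(\text{local }N))^{3/2})$ factor and the product telescopes; (2) prove the structural statement: given a red $H$ with tight components $H_1$ and $H_2$, first handle each $H_i$ separately using the tightly-connected analysis (each $H_i$ lives essentially at a single level and is tripartite there, with the tripartition induced by a single coordinate), then analyze how $H_1$ and $H_2$ are glued; the point is that their shared vertices, if any, force one component to sit ``inside a part'' of the other's tripartition, which is exactly the iterated-blowup structure; (3) the probabilistic estimate that no blue $K_n^{(3)}$ appears, which should again run through Suen's or Janson's inequality applied level-by-level, union-bounding over all $n$-sets and all levels.

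I expect the main obstacle to be step~(2), the structural argument, rather than the probabilistic estimates. With one tight component the tripartition comes for free from a single shared $\phi$-value; with two components one must carefully track which level of the hierarchy each component ``belongs to'' and argue that two components cannot both straddle multiple levels, nor interact in a way that creates a non-iterated-tripartite pattern. In particular, one must rule out configurations where $H_1$ uses a coordinate at level $j$, $H_2$ uses a coordinate at level $j'$, and their common vertices are colored by yet a third level's coloring in a manner inconsistent with nesting; this requires a clean combinatorial lemma about how tight connectivity propagates the ``current level'' along a sequence of edges sharing two vertices, combined with the observation that at most two components means at most one ``branch point'' in the recursion tree. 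A secondary technical nuisance is getting the parameters to line up so that the recursion depth is $\Theta(\sqrt{\log N})$ while the local ground sets stay large enough for the trifference code to exist at every level; I would handle this by having the ground-set size decay like $N \mapsto 2^{\Theta(\sqrt{\log N})}$ at each step, which both bounds the depth correctly and keeps each level's probabilistic argument valid.
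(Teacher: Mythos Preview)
Your proposal has a genuine gap in step~(2), and it stems from a tension already present in the construction. In the trifference coloring, the tripartition of a red tight component $\cC$ is determined by the common $\phi$-value $i$ of its $2$-shadow --- a single coordinate in $[\ell]$ chosen at random. Two red tight components $\cC$ and $\cC'$ at the \emph{same} recursion level can and will use \emph{different} coordinates $i \neq i'$, and the resulting tripartitions of $V$ by coordinate $i$ and by coordinate $i'$ are completely unrelated: neither refines the other, and shared vertices can be scattered arbitrarily across parts of both. Your recursion into ``cells of vertices agreeing on a prefix'' is therefore not well-defined --- there is no canonical prefix, because each component selects its own active coordinate. The intuition that ``two components means at most one branch point in the recursion tree'' would be valid if each component lived in a single node of a fixed hierarchical partition, but that requires the per-level tripartition to be \emph{deterministic}; and as the concluding remarks note, any coloring in which every red subgraph is iterated tripartite with respect to a fixed hierarchy necessarily has blue cliques of order $\Omega(N^{\log_3 2})$, far too large.

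The paper's proof is not recursive and introduces a genuinely new ingredient: an auxiliary edge coloring $\varphi$ of $K_N$ (Lemma~\ref{lem:rainbow-coloring}) in which every color class is a vertex-disjoint union of \emph{bicliques} and every large vertex set contains $\Omega(k^{2.5})$ rainbow triangles. The Ramsey coloring then declares a triple red only when it is $\varphi$-rainbow and additional random functions $g, f_1, f_2, f_3$ on pairs encode the three $\varphi$-colors of its sides. The biclique property is precisely what makes the two-component structural argument go through: once one knows the tripartition $V_\cC^{(1)} \cup V_\cC^{(2)} \cup V_\cC^{(3)}$ and the three associated $\varphi$-colors of a component $\cC$, the biclique property forces $\varphi(uv)$ to take the expected value for \emph{every} pair $u \in V_\cC^{(i)}$, $v \in V_\cC^{(j)}$ --- not just pairs in the $2$-shadow of $\cC$. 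This ``completion'' lets one compare the tripartitions of $\cC$ and $\cC'$ through edges belonging to neither component, and derive a contradiction (either two of $\cC$'s colors collapse, or a monochromatic $\varphi$-triangle appears, impossible by bipartiteness of color classes) unless $V_{\cC'}^{(2)} \cup V_{\cC'}^{(3)}$ is disjoint from $V_\cC$. There is no analogue of this completion step in your scheme, and it is the main idea you are missing.
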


For the proof, we will need to show the existence of an edge coloring of the complete graph with certain powerful properties, encapsulated in the following lemma.

\begin{lemma}\label{lem:rainbow-coloring}
    For positive integers $A\geq 20$, $\ell \ge 1$ and $N = 2^\ell$, there exists an edge coloring $\varphi : E(K_N) \rightarrow [\ell A]$ such that:
\begin{enumerate}
    \item For each color, the edges with that color form a vertex-disjoint union of bicliques.
    \item In every $k$-vertex subset with $k=\exp\left(\Omega(\log N/\log A)\right)$, there are $\Omega(k^{2.5})$ rainbow triangles.
\end{enumerate}
\end{lemma}

We will postpone the proof of this lemma for now, first showing how to use it to prove Theorem~\ref{thm:two-components}.

\begin{proof}[Proof of Theorem \ref{thm:all-two-components}]
Replace $N$ by the smallest power of $2$ exceeding $N$ and let $\ell = \log_2N$.
Let $\varphi$ be as in Lemma~\ref{lem:rainbow-coloring} with $A, k = \exp(\Theta(\sqrt{\ell}))$. Let $g, f_1, f_2, f_3$ be four independent random functions on ${[N] \choose 2}$, where $g: {[N] \choose 2} \rightarrow \{1, 2, 3\}$ uniformly at random and $f_1, f_2, f_3 : {[N] \choose 2} \rightarrow [\ell A]$ uniformly at random.
The Ramsey coloring of the complete $3$-graph on $[N]$ will be $\chi$ where $\chi(uvw)$ for a triple $uvw$ with $u < v < w$ is red if and only if 
\begin{enumerate}
    \item $uvw$ is a rainbow triangle with respect to $\varphi$,
    \item $g(uv) = 1$, $g(vw) = 2$ and $g(uw) = 3$,
    \item $f_1 (uw) = f_1 (vw) = \varphi(uv)$,
    \item $f_2 (uv) = f_2 (uw) = \varphi(vw)$,
    \item $f_3 (uv) = f_3 (vw) = \varphi(uw)$.
\end{enumerate}
Otherwise, the triple $uvw$ is colored blue.

\paragraph{No large blue cliques.}
We upper bound the probability that some subset $S$ of order $k$ forms a blue clique.
Provided $k= \exp(\Omega(\ell/\log A))$, by the properties of $\varphi$, there are $\Omega(k^{2.5})$ rainbow triangles in $S$. By greedily picking out triangles, we may find $\Theta(k^{1.5})$ of them that are edge-disjoint.
Each such triangle is colored red independently with probability $\Theta((\ell A)^{-6})$.
Therefore, the probability that none of them is colored red is $e^{-\Theta(k^{1.5}(\ell A)^{-6})}.$
By the union bound, the probability that there is a blue clique can then be bounded by
\[\binom{N}{k}e^{-\Theta(k^{1.5}(\ell A)^{-6})} = e^{\Theta(k\log N)-\Theta(k^{1.5}(\ell A)^{-6})},\]
which is less than $1$ if $k = \Omega(\ell^{14}A^{12})$. 
To balance this with the condition that $k= \exp(\Omega(\ell/\log A))$,
it suffices to pick $A, k = \exp(\Theta(\sqrt{\ell}))$, as indicated at the outset.

\paragraph{Union of two red tight components is iterated tripartite.}
We now show that the union of any two red tight components is iterated tripartite. We start by proving a structural result for each red tight component.
\begin{lemma}
    Every red tight component $\cC(V_{\cC}, E_{\cC})$ is tripartite and there is a unique tripartition $V^{(1)}_{\cC}\cup V^{(2)}_{\cC}\cup V^{(3)}_{\cC}$ of the vertices $V_{\cC}$.
    Moreover, there exist three colors $c^{(1)}_{\cC}$, $c^{(2)}_{\cC}$ and $c^{(3)}_{\cC}$ such that, for any ordering $i,j,k$ of $1,2,3$, $\varphi(uv) = c^{(k)}_{\cC}$ for any $u\in V^{(i)}_{\cC}$ and $v\in V^{(j)}_{\cC}$.
\end{lemma}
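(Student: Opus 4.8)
The plan is to analyze the structure forced on a red tight component by the five conditions defining $\chi$. First I would observe that every red edge $uvw$ (with $u<v<w$) carries a canonical labelling of its three vertices by $\{1,2,3\}$ via the function $g$: condition (2) says $g(uv)=1$, $g(vw)=2$, $g(uw)=3$, so the vertex incident to the two edges colored $1$ and $3$ (that is $u$) gets type $1$, the vertex incident to edges colored $1,2$ (that is $v$) gets type $2$, and the vertex incident to edges colored $2,3$ (that is $w$) gets type $3$. The key point is that this vertex-typing is consistent across edges sharing a pair: if $uvw$ and $uvw'$ are both red and share the pair $uv$, then $g(uv)$ is a fixed value, which pins down whether $\{u,v\}$ is a type-$\{1,2\}$, $\{1,3\}$, or $\{2,3\}$ pair, and hence forces $u$ and $v$ to receive the same types in both edges. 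Walking along a tight path (each consecutive pair of edges sharing two vertices), this consistency propagates, so within a tight component $\cC$ every vertex gets a well-defined type in $\{1,2,3\}$; declaring $V^{(i)}_\cC$ to be the type-$i$ vertices gives a tripartition, and since every edge is rainbow under $g$ it has one vertex of each type, so $\cC$ is tripartite. Uniqueness of the tripartition follows because the type of each vertex is literally read off from $g$-values of edges in the component, leaving no freedom.

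Next I would extract the three colors $c^{(1)}_\cC, c^{(2)}_\cC, c^{(3)}_\cC$. For a red edge $uvw$ with $u\in V^{(1)}, v\in V^{(2)}, w\in V^{(3)}$, conditions (3)--(5) say $\varphi(uv)$ is recorded (via $f_1$) on both edges $uw$ and $vw$ meeting the type-$3$ vertex $w$; similarly $\varphi(vw)$ is recorded via $f_2$ on the two edges meeting $u$, and $\varphi(uw)$ via $f_3$ on the two edges meeting $v$. Now take any two red edges in $\cC$ sharing a pair, say they share the type-$\{1,2\}$ pair $\{u,v\}$ with third vertices $w,w'$ of type $3$. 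From condition (3) applied to $uvw$: $f_1(uw)=f_1(vw)=\varphi(uv)$; from condition (3) applied to $uvw'$: $f_1(uw')=f_1(vw')=\varphi(uv)$ — so both edges see the \emph{same} value $\varphi(uv)$, which is the $\varphi$-color of the shared type-$\{1,2\}$ pair. More usefully, consider red edges sharing the type-$\{1,3\}$ pair $\{u,w\}$ with third vertices $v,v'$ of type $2$: condition (4) gives $f_2(uv)=f_2(uw)=\varphi(v w)$ and $f_2(uv')=f_2(uw)=\varphi(v'w)$, and since $f_2(uw)$ is a single fixed value we get $\varphi(vw)=\varphi(v'w)$. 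Pushing this kind of equality along tight paths — at each step two of the three pairs are shared, and the defining conditions force the $\varphi$-color of a "crossing" pair to equal a fixed $f$-value on a shared edge — shows that all type-$\{1,2\}$ pairs inside $\cC$ receive a common $\varphi$-color $c^{(3)}_\cC$ (the superscript recording the "opposite" part), and likewise all type-$\{1,3\}$ pairs get $c^{(2)}_\cC$ and all type-$\{2,3\}$ pairs get $c^{(1)}_\cC$. This is exactly the claimed statement: for any permutation $i,j,k$ of $1,2,3$ and any $u\in V^{(i)}_\cC$, $v\in V^{(j)}_\cC$, we have $\varphi(uv)=c^{(k)}_\cC$.

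The main obstacle, and the step I would spend the most care on, is the propagation argument: verifying that both the vertex-typing and the three $\varphi$-colors are genuinely invariant as one moves from edge to edge within a tight component. One has to handle all the ways two red edges can share a pair (the shared pair can be any of the three types, and the two "new" edges at the shared pair can be various types), and check in each case that conditions (2)--(5) force the relevant $g$-values and $f$-values to agree, hence force the types and $\varphi$-colors to agree; then a connectivity/induction argument along tight paths finishes it. A secondary subtlety is making sure the three colors $c^{(1)}_\cC, c^{(2)}_\cC, c^{(3)}_\cC$ are well-defined even when $\cC$ is small or degenerate — for instance if some part $V^{(i)}_\cC$ is empty or a color class of pairs is empty, in which case the corresponding $c^{(\cdot)}_\cC$ may be chosen arbitrarily (or left undefined), which does not affect the subsequent argument that the union of two red tight components is iterated tripartite. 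With the lemma in hand, that final deduction will compare the colors $c^{(\cdot)}_{\cC_1}$ and $c^{(\cdot)}_{\cC_2}$ of the two components, using property (1) of $\varphi$ (each color class is a disjoint union of bicliques) to understand how the two tripartitions can interact on their common vertices, and conclude that the overlap pattern is exactly that of placing one iterated-tripartite structure inside a part of another.
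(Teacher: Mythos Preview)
Your overall plan is close to the paper's: both arguments use condition (2) on $g$ to see that two red edges sharing a pair assign consistent ``roles'' to the shared vertices, and conditions (3)--(5) on $f_1,f_2,f_3$ to see that the unordered triple of $\varphi$-colours is the same for every red edge in the component. The gap is in the step where you globalise.

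You assert that ``walking along a tight path, this consistency propagates, so within a tight component every vertex gets a well-defined type.'' Local consistency across a shared pair is correct, but it does \emph{not} automatically globalise: a tight path from one red edge containing $v$ to another may leave $v$ and later re-enter it, and nothing in conditions (1)--(5) alone forces $v$ to return in the same slot. Concretely, take $c<d<v<a<b$ and the tight path
\[
\{v,a,b\},\ \{c,a,b\},\ \{c,d,b\},\ \{c,d,v\}.
\]
One can choose $g$, $f_1,f_2,f_3$ and three distinct colours $\alpha,\beta,\gamma$ so that all four triples satisfy (1)--(5) with $\varphi(va)=\varphi(ca)=\varphi(cd)=\alpha$, $\varphi(ab)=\varphi(db)=\varphi(dv)=\beta$, $\varphi(vb)=\varphi(cb)=\varphi(cv)=\gamma$; yet $v$ occupies position $1$ in the first edge and position $3$ in the last. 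What actually rules this out is property (1) of $\varphi$ in Lemma~\ref{lem:rainbow-coloring} (each colour class is a disjoint union of bicliques): in the example the $\beta$-edges $ab,db,dv$ would force $\varphi(va)=\beta$, contradicting rainbowness. The paper's proof invokes exactly this biclique property to show that the opposite-$\varphi$-colour of $v$ --- equivalently, your position-type --- is well-defined.

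The same omission bites a second time. Your propagation only pins down $\varphi(uv)$ for pairs $\{u,v\}$ lying in the $2$-shadow of $\cC$, whereas the lemma asserts $\varphi(uv)=c^{(k)}_{\cC}$ for \emph{every} pair with $u\in V^{(i)}_{\cC}$, $v\in V^{(j)}_{\cC}$, including pairs that sit in no red edge of $\cC$. The paper again uses the biclique structure of $\varphi$ on all of $K_N$ to upgrade from shadow pairs to all cross pairs; you defer that property entirely to the two-component step, but it is already essential inside this lemma and indeed is what the subsequent two-component argument relies on.
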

\begin{proof}
    For any hyperedge $\{x,y,z\}$, define $\Phi(\{x,y,z\})$ to be the set $\{\varphi(xy), \varphi(yz),\varphi(xz)\}$. 
    For any two red hyperedges $e,e'$ sharing two vertices $x,y$, if $e = \{x,y,z\}$ and $e'=\{x,y,z'\}$, then the relative order of $x,y,z$ and the relative order of $x,y,z'$ have to be the same, showing that $g(xz') = g(xz)$ and $g(yz') = g(yz)$.
    Therefore,
    \[\varphi(xz) = f_{g(xz)}(xy) = f_{g(xz')}(xy) = \varphi(xz')\]
    and similarly $\varphi(yz) = \varphi(yz')$.
    Hence, as $\cC$ is tightly connected in red, $\Phi(e)$ is the same for any $e\in E_{\cC}$.  

    Now pick an arbitrary $e\in E_{\cC}$ and let $c^{(1)}_{\cC}, c^{(2)}_{\cC}, c^{(3)}_{\cC}$ be an arbitrary enumeration of the elements in $\Phi(e)$.
    Moreover, let $G$ be the subgraph of $K_N$ obtained by taking the $2$-shadow of $\cC$.
    By induction, it is easy to see that the edges of color $c^{(i)}_{\cC}$ in $G$ are connected for each $i\in[3]$.

    Now, for any $v\in V_{\cC}$, choose arbitrary vertices $u,w$ such that $\{u,v,w\}\in E_{\cC}$ and define $i(v)$ to be such that $\varphi(uw) = c^{i(v)}_{\cC}$.
    We show that $i(v)$ does not depend on the choice of $u$ and $w$.
    Suppose instead that there are some other $u',w'$ with $\{u',v,w'\}\in E_{\cC}$ such that $\varphi(uw) \neq  \varphi(u'w')$. Then we know that one of $\varphi(u'v)$ and $\varphi(vw')$ is equal to $\varphi(uw)$.
    Thus, in $G$, $u,v,w$ all belong to the connected subgraph of color $\varphi(uw)$ with $u$ and $w$ neighbors.
    Therefore, without loss of generality, we can assume that there is a monochromatic odd walk from $v$ to $u$ of color $\varphi(uw)$.
    Since the edges with color $\varphi(uw)$ form a vertex-disjoint union of bicliques, it must then be the case that $\varphi(uv) = \varphi(uw)$.
    But this is a contradiction, so $i(v)$ must be independent of the choice of $u$ and $w$.

    We partition $V_{\cC}$ into $V^{(1)}_{\cC}, V^{(2)}_{\cC}$ and $V^{(3)}_{\cC}$ based on the labels $i(v)$.
    It is clear that $\cC$ is tripartite with respect to this tripartition and, as $\cC$ is tightly connected, that this is the only possible tripartition.
    Finally, if $i,j,k$ is an ordering of $1,2,3$, then it is also clear that the connected subgraph of $G$ of color $c^{(k)}_{\cC}$ is a bipartite graph on $V^{(i)}_{\cC}\cup V^{(j)}_{\cC}$.
    If $u\in V^{(i)}_{\cC}$ and $v\in V^{(j)}_{\cC}$, then, by the fact that the edges of color $c^{(k)}_{\cC}$ form a vertex-disjoint union of bicliques in $K_N$, we know that $\varphi(uv) = c^{(k)}_{\cC}$, as desired.
\end{proof}

Suppose now that $\cC$ and $\cC'$ are two red tight components whose union is not tripartite.
Without loss of generality, this means that there is some $u\in V^{(1)}_{\cC}\cap V^{(1)}_{\cC'}$ and $v\in V^{(2)}_{\cC}\cap V^{(1)}_{\cC'}$.
If some $w\in V^{(3)}_{\cC}$ is in $V^{(2)}_{\cC'}\cup V^{(3)}_{\cC'}$, then, since every edge between $V^{(1)}_{\cC'}$ and $V^{(i)}_{\cC'}$ has the same color under $\varphi$ for each of $i = 2, 3$, $c^{(2)}_{\cC} = \varphi(uw) =\varphi(vw)=c^{(1)}_{\cC}$, which is a contradiction.
Therefore, $V^{(3)}_{\cC}\cap (V^{(2)}_{\cC'}\cup V^{(3)}_{\cC'})$ must be empty.
If there is some $w\in V^{(1)}_{\cC}$ that is in $V^{(2)}_{\cC'}\cup V^{(3)}_{\cC'}$, say $w\in V^{(2)}_{\cC'}$,
then, since every edge between $V^{(1)}_{\cC}$ and $V^{(2)}_{\cC}$ has the same color under $\varphi$, $\varphi(uv) = c^{(3)}_{\cC} = \varphi(vw) = c^{(3)}_{\cC'}$. But then picking any $x\in V^{(2)}_{\cC'}$ we have a monochromatic triangle $uvx$, which contradicts the fact that each color class in $\varphi$ is bipartite. 
Therefore, $V^{(1)}_{\cC}\cap (V^{(2)}_{\cC'}\cup V^{(3)}_{\cC'})$ must also be empty.
Similarly, $V^{(2)}_{\cC}\cap (V^{(2)}_{\cC'}\cup V^{(3)}_{\cC'})$ must be empty.
Hence, $V^{(2)}_{\cC'}\cup V^{(3)}_{\cC'}$ and $V_{\cC}$ are disjoint, so the union of $\cC$ and $\cC'$ is iterated tripartite, as required.
\end{proof}

It remains to prove Lemma~\ref{lem:rainbow-coloring}. We first record a technical result about binary trees. To state this result, we define the \emph{weight} of an internal vertex in a binary tree to be $2$ less than the number of leaves in the subtree consisting of this vertex and all its descendants.

\begin{lemma} \label{lem:tree}
    There exists a constant $C$ such that, for any binary tree $T$ with $k$ leaves and any collection $X$ of vertices of $T$ with total weight less than $k\log k/C$, the number of triples of leaves whose least common ancestor lies outside $X$ is at least $k^{2.5}$.
\end{lemma}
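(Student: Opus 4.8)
The plan is to prove this lemma by induction on the binary tree $T$, after recasting the statement in a recursive, scale-free form. Write $N(T,X)$ for the number of triples of leaves of $T$ whose least common ancestor avoids $X$, and let $W$ denote the total weight of $X$ (we may assume $X$ consists of internal vertices, since leaves are never least common ancestors and carry weight $0$). If $T$ has $k\ge 2$ leaves, root $r$, and the two subtrees $T_1,T_2$ below $r$ have $a$ and $b=k-a$ leaves, then, writing $X_i=X\cap T_i$ and letting $W_i$ be the weight of $X_i$, one has the exact identities
\[
N(T,X)=N(T_1,X_1)+N(T_2,X_2)+\mathbf 1[r\notin X]\left(\binom k3-\binom a3-\binom b3\right)
\]
and $W=W_1+W_2+\mathbf 1[r\in X]\,(k-2)$, since a triple with a leaf in each $T_i$ has its least common ancestor at $r$, while the root has weight $k-2$.

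The invariant I would carry through the induction is that there are absolute constants $B\ge 4$ and $\gamma>0$ with
\[
N(T,X)\ \ge\ \binom k3 B^{-W/k}-\gamma W
\]
for every binary tree $T$ on $k$ leaves and every set $X$ of internal vertices. Two features of this bound matter. First, the extremal configuration is a balanced tree with $X$ equal to its top $\Theta(W/k)$ levels, leaving $\binom k3\big/4^{W/k+O(1)}$ triples untouched, so the base of the exponent must morally be $4$: descending one level of $X$ costs a factor roughly $4$ in the triple count but only roughly $k$ in weight. Second, the additive term $-\gamma W$ is genuinely necessary: taking $X$ to consist of all internal vertices of a balanced tree except the bottom layer of cherries gives $N=0$ while $W\sim k\log_2 k$, so no purely multiplicative bound $N\ge\binom k3 B^{-W/k}$ can possibly hold; the $-\gamma W$ lets the invariant degenerate gracefully when $W$ is large.

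For the inductive step one conditions on whether $r\in X$. When $r\notin X$, the recursion together with the inductive hypotheses reduces the claim, after the $-\gamma W$ contributions cancel, to the superadditivity of $\rho(k,W):=\binom k3\bigl(1-B^{-W/k}\bigr)$, i.e.\ $\rho(a,W_1)+\rho(b,W_2)\le\rho(a+b,W_1+W_2)$; this is a routine calculus check, expressing that concentrating weight inside a small subtree removes comparatively few triples. When $r\in X$ one forgoes the ``spanning'' triples and also loses $k-2$ units of weight, so it suffices to verify
\[
B^{(k-2)/k}\left(\binom a3 B^{-W_1/a}+\binom b3 B^{-W_2/b}\right)\ \ge\ \binom k3 B^{-(W_1+W_2)/k},
\]
the $\gamma(k-2)$ freed up by spending weight on $r$ only helping. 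A short optimization shows the worst case here is $a=b=k/2$ and $W_1=W_2$, where the inequality becomes $B^{(k-2)/k}\cdot\frac{k-4}{4(k-1)}\ge 1$; this holds for all $k\ge 5$ once $B$ is a large enough absolute constant (the constraint being tightest at $k=5$), and the finitely many cases $k\le 4$ are checked by hand, the slack in $-\gamma W$ covering the degenerate subtrees where $N=0$.

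Finally, the lemma follows by choosing $C$ so that $3-(\ln B)/C>\tfrac52$ (the base of the logarithm only rescales $C$): if $W<k\log k/C$ then $B^{-W/k}>k^{-(\ln B)/C}$, hence
\[
N(T,X)\ >\ \binom k3 k^{-(\ln B)/C}-\tfrac{\gamma}{C}\,k\log k\ \ge\ \tfrac17\,k^{\,3-(\ln B)/C}-O(k\log k),
\]
which exceeds $k^{5/2}$ as soon as $k$ is at least an absolute constant --- the only relevant range, as the lemma is invoked with $k$ superpolynomially large. The genuine difficulty is not any single computation but identifying this invariant: the naive multiplicative bound is false, and the repair term $-\gamma W$ must be balanced simultaneously against the factor-$4$-per-level loss (which forces $B\ge 4$, and then $B$ large to handle small trees) and against the boundary cases; once the invariant is pinned down, the superadditivity of $\rho$ and the check for $k\le 4$ are mechanical.
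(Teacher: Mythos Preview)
Your approach is a genuinely different route from the paper's. The paper relaxes $X$ to a fractional $f:V_{\rm in}\to[0,1]$, then shows that the optimal value $\nu^*_T(W)$ can only decrease under ``rotations'' that rebalance the tree; after finitely many rotations one reaches a tree in which every split has ratio in $[1/3,2/3]$, and there the bound is immediate since the top $\Theta(\log k)$ levels each carry weight $\Theta(k)$ and $m_u=\omega(k^{2.5})$. Your direct induction with the invariant $N\ge\binom{k}{3}B^{-W/k}-\gamma W$ is more elementary and avoids the rotation machinery; it is also slightly sharper in spirit, since the invariant tracks the extremal balanced example explicitly.

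That said, the sketch has real gaps. In the $r\in X$ step you assert the worst case is $a=b=k/2$, $W_1=W_2$. This is correct only when $a,b\ge 3$: after the substitution $u=B^{-W_1/a}$, $v=B^{-W_2/b}$ your inequality reads $B^{(k-2)/k}(\binom{a}{3}u+\binom{b}{3}v)\ge\binom{k}{3}u^{a/k}v^{b/k}$, and along each ray $v=cu$ this is linear in $u$, so one is minimizing $g(c)=B^{(k-2)/k}(\binom{a}{3}+c\binom{b}{3})-\binom{k}{3}c^{b/k}$; a calculation gives $\min_c g\ge 0$ iff $B^{(k-2)/k}\ge(k-1)(k-2)/\bigl(((a-1)(a-2))^{a/k}((b-1)(b-2))^{b/k}\bigr)$, and this is indeed maximized at $a=b$. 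But when $a\in\{1,2\}$ one has $\binom{a}{3}=0$ and $g(c)<0$ for small $c$, so your displayed inequality fails outright for a range of $W_2$ --- the $\gamma(k-2)$ term is not merely ``helping'' here but essential. You then need to check that a bounded $\gamma$ suffices uniformly in $k$: this amounts to minimizing the convex function $t\mapsto\binom{b}{3}t^{k/b}+\gamma(k-2)-\binom{k}{3}B^{-(k-2)/k}t$ on $[0,1]$, and the minimum value is $\gamma(k-2)-O(kB^{-\Omega(k)})$, so any fixed $\gamma>0$ works once $B$ is large. Similarly, the superadditivity of $\rho$ in the $r\notin X$ case is not quite routine: in the $(u,v)$ variables it becomes $\binom{k}{3}(1-u^{a/k}v^{b/k})\ge\binom{a}{3}(1-u)+\binom{b}{3}(1-v)$ on $(0,1]^2$, which holds but needs a short boundary/critical-point check. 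With these two points filled in, your argument does go through.
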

\begin{proof}
For any vertex $v$, let $n_v$ be the number of leaves in the subtree rooted at $v$ and let $m_v$ be the number of triples of leaves whose least common ancestor is $v$. 
Define an \emph{imbalance} in a tree $T$ to be a configuration where $u$ is the parent of $v$ and $w$, $v$ is the parent of $x$ and $y$, $n_x>n_w$ and $n_x\geq n_y$. 
In this case, we say that the imbalance is at $u$.
The \emph{rotation} of the imbalance is the modified  tree $T'$ where the subtree rooted at $x$ is swapped with the smaller subtree rooted at $w$.
Note that the rotation increases the depths of leaves below $w$ by $1$ and decreases the depths of the leaves below $x$ by $1$.
If we set $D_{\textup{total}}$ to be the sum of the depths of the leaves in the binary tree, then the rotation at $u$ decreases $D_{\textup{total}}$ by $n_x-n_w>0$.

Let $V_{\textup{in}}$ be the set of internal vertices of $T$.
For any function $f:V_{\textup{in}}\to [0,1]$ that maps the internal vertices to real numbers in $[0,1]$, we define its \emph{weight} by $\sum_{v\in V_{\textup{in}}}f(v)\cdot (n_v-2)$ and its \emph{score} by $\sum_{v\in V_{\textup{in}}}(1-f(v))\cdot m_v$.
To complete the proof of the lemma, it will suffice to show that there exists a constant $C$ such that any function $f:V_{\textup{in}}\to [0,1]$ with weight less than $k\log k/C$ has score at least $k^{2.5}$. 

For any $W\geq 0$, let $\nu^*_T(W)$ be the minimum score of a function $f:V_{\textup{in}}\to [0,1]$ with weight at most $W$.
If $T$ has an imbalance labeled with $u,v,w,x,y$ as in the definition, let $T'$ be the rotation of the imbalance.
Then the only changes in weights or scores occur at $u$ and $v$.
The following table records how the weights and scores change at these two vertices:

\begin{center}
\begin{tabular}{c|c|c}
     & $T$ (before rotation)  & $T'$ (after rotation) \\\hline
   $n_u$  & $n_u=(n_x+n_y+n_w)-2$ & $n_u'=(n_x+n_y+n_w)-2=n_u$\\\hline
   $n_v$ & $n_v=(n_x+n_y)-2$ & $n_v'=(n_y+n_w)-2$ \\\hline
   $m_u$ & $m_u=\frac{1}{2}(n_x+n_y)n_w[(n_x+n_y+n_w)-2]$ & $m_u'=\frac{1}{2}n_x(n_y+n_w)[(n_x+n_y+n_w)-2]$\\\hline
   $m_v$ & $m_v=\frac{1}{2}n_x n_y[(n_x+n_y)-2]$ & $m_v'=\frac{1}{2}n_yn_w[(n_y+n_w)-2]$
\end{tabular}
\end{center}

Let $f$ be the optimizer for $\nu^*_T(W)$.
Let $W' = f(u)(n_u-2)+f(v)(n_v-2)\in [0,n_u+n_v-4]$ be the total contribution of $u$ and $v$ to the weight of $W$.
By the optimality of $f$, we know that $(f(u),f(v))$ is the maximizer of the following linear program:
\begin{align*}
    \textup{maximize }& m_ua+m_vb\\
    \textup{subject to }& \begin{cases}
        0\leq a,b\leq 1,\\
        (n_u-2)a+(n_v-2)b\leq W'.
    \end{cases}
\end{align*}
Denote by $M(W')$ the optimum of this linear program.
If we set 
\[(t,t') = \begin{cases}(u,v)&\textup{ if }\frac{m_u}{n_u-2}\geq \frac{m_v}{n_v-2},\\(v,u)&\textup{ otherwise},\end{cases}\] 
then $M(W')$ is a line through the origin with slope $m_t/(n_t-2)$ on $[0,n_t-2]$ and the slope becomes $m_{t'}/(n_{t'}-2)$ on $[n_{t}-2,n_u+n_v-4]$.

We may similarly define $M'(W')$, where we maximize $m_u'a+m_v'b$ subject to the conditions $a,b\in [0,1]$ and $(n_u'-2)a+(n_v'-2)b\leq W'$.
Note now that 
\[\frac{m_u'}{n_u'-2} = \frac 12 (n_y+n_w)n_x> \frac 12 n_xn_y> \frac 12n_yn_w = \frac{m_v'}{n_v'-2},\]
so $M'(W')$ is a line through the origin with slope $m_u'/(n_u'-2)$ on $[0,n_u'-2]$ and the slope becomes $m_v'/(n_v'-2)$ on $[n'_u - 2, n_u'+n_v'-4]$, after which it remains constant. 

Given the definitions of the functions $M(W')$ and $M'(W')$, we make the following claim.

\begin{claim}
    For all $W'\in [0,n_u+n_v-4]$, $M'(W')\geq M(W')$.
\end{claim}
\begin{proof}
    As $M'(W')$ is concave and $M(W')$ is piecewise linear, it suffices to verify this for $W'=0$, $W'=n_t-2$ and $W'=n_u+n_v-4$.
    The inequality clearly holds at $W'=0$.
    Moreover, since $n_t-2\leq n_u-2=n_u'-2$, we know that
    \[M'\left(n_t-2\right)-M(n_t-2) = \frac{m_u'}{n_u'-2}(n_t-2)-\frac{m_t}{n_t-2}(n_t-2) = \left(\frac{m_u'}{n_u'-2}-\frac{m_t}{n_t-2}\right)(n_t-2).\]
    Hence, to verify the inequality for $W'=n_t-2$, it suffices to show that $m_u'/(n_u'-2)\geq m_t/(n_t-2)$.
    Computing directly, we see that 
    \[\frac{m_u'}{n_u'-2} = \frac 12(n_y+n_w)n_x > \frac 12(n_x+n_y)n_w = \frac{m_u}{n_u-2}\]
    and
    \[\frac{m_u'}{n_u'-2} = \frac 12(n_y+n_w)n_x > \frac 12 n_xn_y = \frac{m_v}{n_v-2}.\]
    Finally, to verify the inequality for $W'=n_u+n_v-4$, note that $M(n_u+n_v-4)$, $M'(n_u'+n_v'-4)$ are equal as they both count the triples of leaves which meet at least two of the subtrees rooted at $x$, $y$ and $w$.
    Since $n_u'+n_v'-4<n_u+n_v-4$, we have $M'(n_u+n_v-4)=M'(n_u'+n_v'-4)=M(n_u+n_v-4)$, as desired.   
\end{proof}

If we now set $f'$ to be the same as $f$ except that $f'(u)=a$, $f'(v)=b$, where $(a,b)$ is the extremizer for $M'(W')$, then both the weight and the score of $f'$ in $T'$ are at most those of $f$ in $T$, so that $\nu_{T'}^*(W)\leq \nu_T^*(W)$. That is, rotating imbalances does not increase $\nu_T^*(W)$, so, since we are trying to give a lower bound on $\nu_T^*(W)$, we may rotate as often as we please.

To apply this observation, we need to show that for any binary tree $T$ it is possible to do a series of rotations so that the resulting binary tree has no remaining imbalances.
But, as observed earlier, any rotation strictly decreases $D_{\textup{total}}$, the sum of the depths of the leaves.
Since $D_{\textup{total}}$ is always a non-negative integer, this shows that the binary tree will have no imbalances after finitely many rotations.

We may therefore assume that $T$ has no imbalances. 
In particular, for any vertex $u$ with children $v$ and $w$, we must have that $n_v, n_w$ are both in $[n_u/3, 2n_u/3]$.
Otherwise, without loss of generality, we may assume that $n_w<n_u/3$ and $n_v>2n_u/3$.
But then there would exist a child $x$ of $v$ such that $n_x>n_u/3>n_w$, contradicting that there are no imbalances. 
Therefore, any vertex $u$ at depth $d<0.1\log k$ is an internal vertex with weight $\Omega(3^{-d}k)$ and $m_u=\Omega(3^{-3d}k^3)$. 
Since $\Omega(3^{-3d}k^3) = \omega(k^{2.5})$, if the score of a function $f:V_{\textup{in}}\to [0,1]$ is less than $k^{2.5}$, then any vertex whose depth is less than $0.1\log k$ must be mapped to a number which is at least $1/2$ by $f$.
Since the vertices at depth $d$ have total weight $k-O(2^d)$ for every $d<0.1\log k$, it follows that the weight of $f$ is $\Omega(k\log k)$, as desired.
\end{proof}
Now we are ready for the proof of the key lemma.

\begin{proof}[Proof of Lemma~\ref{lem:rainbow-coloring}.]
Let $N = 2^{\ell}$ and let $G$ be a complete graph on vertex set $[N]$. 
For each $i\in [\ell]$ and $u\in [N]$, let $c_i(u)$ be an independent uniform element of $[A]$.
Finally, for any $u\neq v\in [N]$, if $t=v_2(u-v)$, where $v_2(x)$ is the $2$-adic valuation function whose value is the highest power of $2$ dividing $x$, we color the edge $uv$ with the product color $(t, (-1)^{\lfloor u/2^t\rfloor} (c_t(u)-c_t(v)) \bmod A)$.
Note that as $\lfloor u/2^t\rfloor$ and $\lfloor v/2^t\rfloor$ have different parities, the color of $uv$ does not depend on how we order $u$ and $v$. Moreover, each color class is the union of vertex-disjoint bicliques, as required.

Let $k$ be a positive integer to be determined later and let $S\subseteq [N]$ be any subset of order $k$.
We wish to upper bound the probability that there are too few rainbow triangles on this set of vertices. 
To this end, consider the following binary tree $T$ induced by $S$: place $S$ at the root vertex; consider the first bit where some elements of $S$ do not agree when written in binary and then split $S = S_0\cup S_1$ based on the value of this bit, where we assume $\abs{S_0}\geq \abs{S_1}$; attach the binary trees induced by $S_0$ and $S_1$ to the root vertex.

For an internal vertex corresponding to the set $S'$, let $S_0'$ and $S_1'$ be the two subsets corresponding to its children with $\abs{S_0'}\geq \abs{S_1'}$.
Suppose that the first bit where some elements disagree is the $t$-th bit.
Call $S'$ \emph{good} if there is no color that appears more than $|S'_0|/2$ times among the collection of $c_t(u)$ with $u \in S_0'$  
and \emph{bad} otherwise.
If $\abs{S'}\geq 3$, then $\abs{S_0'}\geq 2$, so the probability that $S'$ is bad can be bounded above by 
$$A \cdot 2^{|S'_0|} \cdot A^{-(|S'_0|+1)/2} \leq \exp(-\Omega((\abs{S'}-2))\log A),$$
where we used that $A \ge 20$. 
Moreover, the event that $S'$ is bad is independent from all other similar events.

If $S'$ is good, then we know that, for every $u\in S_0'$, there are at least $\abs{S_0'}/2$ vertices $v \in S_0'$ with $c_t(u)\neq c_t(v)$. 
For any such $u,v\in S_0'$ and any $w\in S_1'$, it is clear that $uvw$ is rainbow.
Therefore, we have found at least
\[\frac 12 \frac{\abs{S_0'}^2}{2}\abs{S_1'}\geq \frac{1}{4}\left(\binom{\abs{S_0'}}{2}\abs{S_1'}+\binom{\abs{S_1'}}{2}\abs{S_0'}\right)\]
rainbow triangles with $S'$ as their least common ancestor.
Note that this is exactly a quarter of the number of triplets whose least common ancestor is $S'$.

If we now apply Lemma~\ref{lem:tree} with $X$ the set of bad vertices in $T$, we see that if the total weight on the vertices in $X$ is less than $k\log k/C$, then the number of triples of leaves whose least common ancestor is outside $X$, and therefore good, is at least $k^{2.5}$. By the observation above, this would mean that we have at least $k^{2.5}/4$ rainbow triangles in $S$. We may therefore assume that the total weight on the set of bad vertices is at least $k\log k/C$. Since the weight of the vertex corresponding to $S'$ is $\abs{S'}-2$ and the probability that $S'$ is bad can be bounded by $\exp(-\Omega((\abs{S'}-2))\log A)$, the probability that all of the vertices in $X$ are bad is at most $\exp(-\Omega(k\log k\log A))$. Since there are at most $2^k$ possible choices for $X$ (as there are exactly $k-1$ internal vertices), a union bound implies that the probability there are fewer than $k^{2.5}/4$ rainbow triangles in $S$ is at most $\exp(-\Omega(k\log k\log A))$.

To ensure, by a union bound, that there are at least $k^{2.5}/4$ rainbow triangles in all vertex subsets $S$ of order $k$ with positive probability, we need that 
\[\binom{N}{k}\exp(-\Omega(k\log k\log A))<1,\]
or, equivalently, that $k\log k\log A = \Omega(k\log N).$
Therefore, it suffices to take $k = \exp(\Omega(\log N/\log A))$.
\end{proof}

\section{Concluding remarks}

The main problem left open by this paper is whether Conjecture~\ref{conj:main} holds in full generality. However, it would already be interesting to prove it for hypergraphs with three tight components or to find a different proof for the two component case that gives a better bound. A particular case of interest is the Fano plane $F$, which is the unique $3$-graph with seven edges on seven vertices in which every pair of vertices is contained in a unique edge. The Fano plane is not iterated tripartite, so, according to Conjecture~\ref{conj:main}, $r(F, K_n^{(3)})$ should not grow polynomially. A proof of this would considerably strengthen our belief in the conjecture, on whose validity the authors do not form a completely united front.
Let us also point out that in trying to prove \cref{conj:main},
one cannot hope to prove a statement analogous 
to \cref{thm:all-tight} and \cref{thm:all-two-components} that gives a single Ramsey construction avoiding the entire family of possible $H$ simultaneously.
Indeed, if a red/blue edge coloring of $K_N^{(3)}$ has all its red subgraphs iterated tripartite, it is not hard to show by induction that there is a blue clique of size $2^{\log_3 N+O(1)} = \Omega(N^{\log_32}).$

Another problem of interest is to determine the growth rate of $r(\mathcal{H}, K_n^{(3)})$, where $\mathcal{H}$ is the family of all non-tripartite tightly connected $3$-graphs. 
\cref{thm:all-tight} gives the lower bound  $r(\mathcal{H}, K_n^{(3)}) \geq 2^{\Omega(n^{2/3})}$. As an upper bound, we can show that $r(\mathcal{H}, K_n^{(3)}) \leq 2^{O(n)}$. To see this, for each positive integer $N$, let $n(N)$ be the largest positive integer such that every red/blue-colored $K_N^{(3)}$ with no red copy of any hypergraph from $\mathcal{H}$ contains a blue $K_{n(N)}^{(3)}$. We claim that $n(N)\geq 1+n\left(\lceil\frac{N-1}{2}\rceil\right)$. Assuming this claim, a simple induction implies that $n(2^k-1)\geq k$ for all integers $k \geq 1$ and we are done.

To prove the claim, we pick an arbitrary vertex $v\in V(K_n^{(3)})$ and consider $G_v$, the red/blue-colored graph on $V \setminus \{v\}$ where the color of each edge $wx$ is the color of the triple $vwx$. 
Since, by assumption, all red tight components in our coloring of $K_n^{(3)}$ are tripartite, all red connected components of $G_v$ are bipartite.
This is exactly equivalent to saying that the set of red edges in $G_v$ is bipartite. 
Therefore, there is a subset $V'$ of $V\backslash \{v\}$ such that $G_v[V']$ contains only blue edges and $\abs{V'}\geq \lceil \frac{N-1}{2}\rceil$.
Now consider the coloring of the $3$-graph on $V'$, which again has no red copy of any hypergraph from $\mathcal{H}$.
By the definition of the function $n$, there exists a subset $V''$ of $V'$ of size $n\left(\lceil\frac{N-1}{2}\rceil\right)$ such that $V''$ is completely blue.
It is then easy to see that $V''\cup \{v\}$ is completely blue as well, as the presence of any red edge would contradict the choice of $V'$. The required bound on $n(N)$ follows.

While we suspect that the correct bound for $r(\mathcal{H}, K_n^{(3)})$ is of the form $2^{\Theta(n)}$, it would be more interesting if it were $2^{O(n^{1 - \epsilon})}$ for some $\epsilon > 0$. Such intermediate growth has been demonstrated~\cite{CFHMSV} for the Ramsey numbers $r(K_4^{(3)}, S_n^{(3)})$, where $S_n^{(3)}$ is the $3$-graph on $n+1$ vertices consisting of all $\binom{n}{2}$ edges incident to a given vertex, but it would be very interesting to have such an example with $K_n^{(3)}$ instead of $S_n^{(3)}$. A particular family of cases where such intermediate growth has not been ruled out is $r(C_\ell\setminus e, K_n^{(3)})$, where $C_\ell\setminus e$ is the tight cycle of length $\ell \not\equiv 0 \pmod{3}$ with a single edge removed. While Theorem~\ref{thm:tight} gives $r(C_\ell\setminus e, K_n^{(3)}) \geq 2^{\Omega(n^{2/3})}$, we do not currently know of any upper bound better than $r(C_\ell\setminus e, K_n^{(3)}) \leq r(C_\ell, K_n^{(3)}) \leq 2^{O_\ell(n \log n)}$ for $\ell$ sufficiently large~\cite{Mu}. 

It might be interesting to look at the analogous conjecture to Conjecture~\ref{conj:main} for higher uniformities. If Conjecture~\ref{conj:main} is indeed correct, its generalization should say that for a $k$-graph $H$, there exists a constant $c$ depending only on $H$ such that $r(H, K_n^{(k)}) \leq n^c$ for all $n$ if and only if $H$ is contained in an iterated blowup of an edge. One direction of this conjecture is simple, while certain partial results in the other direction again hold. For example, a straightforward extension of either construction in Section~\ref{sec:tight} implies that the conjecture holds for \emph{$2$-tightly connected hypergraphs}, hypergraphs where any two edges $e$ and $f$ are joined by a sequence of edges $e = e_0, e_1, \dots, e_t = f$ such that $e_{i-1}$ and $e_i$ share at least two vertices for all $i = 1, \dots, t$. In particular, if we generalize the argument that yields Theorem~\ref{thm:tight}, we obtain the following result.

\begin{thm}\label{thm:higher-uniformities}
    If $k\geq 3$ and $H$ is a $k$-graph which is 2-tightly connected and not $k$-partite, then $r(H, K_n^{(k)})\geq 2^{\Omega(n^{2/k})}$.
\end{thm}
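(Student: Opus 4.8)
The plan is to establish the higher-uniformity analogue of \cref{thm:all-tight}: for every positive integer $N$, there is a red/blue coloring of $K_N^{(k)}$ in which every red subgraph that is $2$-tightly connected is $k$-partite, while the largest blue clique has order $O_k((\log N)^{k/2})$. Since a $2$-tightly connected, non-$k$-partite $H$ then has no red copy, inverting the relationship between $N$ and the blue-clique bound gives $r(H,K_n^{(k)}) \ge 2^{\Omega(n^{2/k})}$. For the construction, fix a large constant $C_k$, set $\ell = C_k\log N$ and $n = \ell^{k/2}$, and replace the trifference code of \cref{sec:tight} by a set $V\subseteq [k]^\ell$ of size $N$ such that every $k$-subset of $V$ has at least $r = c_k\ell$ coordinates at which all $k$ values are pairwise distinct (a $k$-perfect-hash code). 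Such a $V$ exists by the first moment method: for uniform i.i.d.\ strings a fixed coordinate is rainbow for a fixed $k$-tuple with probability $k!/k^k$, the coordinates are independent, and a Chernoff bound beats the union bound over the $\le N^k$ many $k$-subsets once $C_k$ is large. As in \cref{sec:tight}, this forces $r\le |c(uv)|\le \ell$ for the difference set $c(uv)$ of every pair; we let $\phi(uv)$ be a uniformly random element of $c(uv)$, all independent, and declare a $k$-edge $e$ red exactly when all $\binom{k}{2}$ pairs inside $e$ receive the same $\phi$-value.

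That red $2$-tightly connected subgraphs are $k$-partite then follows as before: two red edges sharing two vertices $u,v$ both pin $\phi(uv)$ to their common value, so they have the same common value, and traversing a $2$-tight path shows that a whole red $2$-tightly connected component $\cC$ shares a single value $i$. Since all $\binom{k}{2}$ pairs of every edge of $\cC$ receive value $i$, those pairs differ at coordinate $i$, so the $k$ vertices of each edge of $\cC$ take $k$ distinct values there, and partitioning $V(\cC)$ according to the $i$-th coordinate gives the desired $k$-partition. It is exactly here that one uses that the red rule involves all $\binom{k}{2}$ pairs, since this certifies that the coordinate is rainbow even for $k\ge 4$.

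For the absence of a blue $K_n^{(k)}$, fix an $n$-set $U$ and let $T_e$ be the event that $e\in \binom{U}{k}$ is red. Then $\Pr[T_e] = \Theta_k(\ell^{1-\binom{k}{2}})$ (between $r$ and $\ell$ admissible common values, each realized with probability $\Theta_k(\ell^{-\binom{k}{2}})$), so $\mu := \sum_e \Pr[T_e] = \Theta_k(n^k \ell^{1-\binom{k}{2}}) = \Theta_k(\ell^{1+k/2})$. Two edges are dependent exactly when they share a pair, i.e.\ at least two vertices, and each edge has $O(n^{k-2})$ neighbors; applying Suen's inequality exactly as in the proof of \cref{thm:all-tight} gives $\Pr[\bigwedge_e \overline{T_e}] \le \exp(-\mu + \sum_{e\sim e'} y(e,e'))$. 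The factors $\prod_{e''\sim e\text{ or }e''\sim e'}(1-\Pr[T_{e''}])^{-1}$ are $\Theta(1)$ because $n^{k-2}\ell^{1-\binom{k}{2}} = \ell^{1-k/2}\to 0$; the term $\sum_{e\sim e'}\Pr[T_e]\Pr[T_{e'}] = O(n^{2k-2}\ell^{2-2\binom{k}{2}}) = O(\ell^2)$; and, splitting $\sum_{e\sim e'}\Pr[T_e\wedge T_{e'}]$ by the overlap size $j\in\{2,\dots,k-1\}$, the overlap-$j$ contribution is $O(n^{2k-j}\ell^{1-2\binom{k}{2}+\binom{j}{2}}) = O(\ell^{E(j)})$, where $E(j) = \tfrac{1}{2} j^2 - \tfrac{k+1}{2}j + (k+1)$ is an upward parabola symmetric about $(k+1)/2$, so that $\max_{2\le j\le k-1} E(j) = E(2) = E(k-1) = 2$. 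Hence $\sum_{e\sim e'}y(e,e') = O(\ell^2) = o(\mu)$ since $k\ge 3$, so $\Pr[\bigwedge_e \overline{T_e}] \le e^{-\Theta(\ell^{1+k/2})}$. A union bound over the $\binom{N}{n}\le 2^{n\log N} = 2^{\ell^{1+k/2}/C_k}$ choices of $U$ then succeeds once $C_k$ is large, which is precisely what the balance $n = \ell^{k/2}$ was chosen to permit; solving for $N$ in terms of $n$ yields $r(H,K_n^{(k)}) \ge 2^{\Omega(n^{2/k})}$.

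The step I expect to require the most care is the Suen error estimate, uniformly in $k$: one must verify that no overlap size $j$ beats the $\Theta(\ell^2)$ coming from $j = 2$ and $j = k-1$, and that this genuinely lies below $\mu = \Theta(\ell^{1+k/2})$, which is where $k\ge 3$ enters. Everything else is a routine re-run of \cref{sec:tight} with $3$ replaced throughout by $k$, the trifference code replaced by a $k$-perfect-hash code, and $\binom{3}{2}$ by $\binom{k}{2}$; the one structural point worth double-checking is the easy fact that the ``all $\binom{k}{2}$ pairs agree'' definition of red still forces a unique rainbow coordinate, which is what makes the $k$-partiteness argument go through.
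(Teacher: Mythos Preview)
Your proposal is correct and follows exactly the approach the paper indicates: the paper does not give a standalone proof of \cref{thm:higher-uniformities} but simply notes that it follows from the ``straightforward extension of either construction in Section~\ref{sec:tight}'', replacing the trifference code by a $k$-perfect-hash code and $\binom{3}{2}$ by $\binom{k}{2}$. Your write-up supplies precisely those details, and your Suen error computation (the parabola $E(j)=\tfrac12 j^2-\tfrac{k+1}{2}j+(k+1)$ maximized at $j\in\{2,k-1\}$ with value $2<1+k/2$ for $k\ge 3$) is the correct verification of the one step that genuinely needs checking in higher uniformity.
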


One might also try to find necessary and sufficient conditions on $k$-graphs $H$ under which $r(H, K_n^{(k)})$ is upper bounded by a function of the form $2^{n^c}$ or $2^{2^{n^c}}$ and so on. In this direction, we make the following, likely difficult, conjecture, which would extend Theorem~\ref{thm:higher-uniformities}. Generalizing the definition above, we say that a hypergraph is \emph{$s$-tightly connected} if any two edges $e$ and $f$ are joined by a sequence of edges $e = e_0, e_1, \dots, e_t = f$ such that $e_{i-1}$ and $e_i$ share at least $s$ vertices for all $i = 1, \dots, t$.

\begin{conj}\label{conj:higher-uniformities}
    If $k > s$ and $H$ is a $k$-graph which is $s$-tightly connected and not $k$-partite, then there exists a positive constant $c$ such that $r(H, K_n^{(k)})\geq t_{s}(n^c)$, where the tower function $t_i(x)$ is defined by $t_1(x) = x$ and $t_{i}(x) = 2^{t_{i-1}(x)}$ for all $i \geq 2$.
\end{conj}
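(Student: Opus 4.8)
The target is a tower-type lower bound for $s$-tightly connected, non-$k$-partite $k$-graphs, so the natural strategy is a stepping-up argument: start from the $s=2$ construction (essentially \cref{thm:tight}, or better its higher-uniformity version \cref{thm:higher-uniformities}) giving a single-exponential bound, and iterate an amplification step $s-2$ times, each one gaining an extra exponential. First I would isolate the base case: if $H$ is $2$-tightly connected, non-$k$-partite, then $r(H,K_n^{(k)}) \ge 2^{\Omega(n^{2/k})} = t_2(n^{\Omega(1)})$, which is exactly \cref{thm:higher-uniformities}. Then I would set up an inductive statement: for each $s$ with $2 \le s < k$, every $s$-tightly connected non-$k$-partite $k$-graph $H$ satisfies $r(H,K_n^{(k)}) \ge t_s(n^{c_s})$ for some constant $c_s = c_s(H) > 0$.

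\textbf{The stepping-up step.} The heart of the argument is a construction that converts a good coloring on $M$ vertices into a good coloring on $N = 2^{\Omega(M^\alpha)}$ vertices, trading one level of tightness. The model is the classical Erd\H{o}s--Hajnal--Rado stepping-up lemma adapted to off-diagonal colorings, as used in \cite{CFS, MRa}. Concretely, suppose $\chi'$ is a red/blue coloring of $K_M^{(k-1)}$ whose red $(s-1)$-tightly connected subgraphs are all $(k-1)$-partite and whose largest blue clique has order $b$. Identify the vertex set $[N]$ with $\{0,1\}^M$ via binary strings of length $M$; for distinct $x,y$ let $\delta(x,y)$ be the largest coordinate where they differ. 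Color a $k$-tuple $x_1 < \cdots < x_k$ (ordered by the $\delta$-structure) according to $\chi'$ applied to the induced $(k-1)$-tuple of $\delta$-values, with a parity correction guaranteeing consistency. The key local claim is: if $e, f$ are red edges of the stepped-up coloring sharing $s$ vertices, then their "$\delta$-projections" share $s-1$ vertices in the ground coloring, so a red $s$-tightly connected subgraph upstairs projects onto a red $(s-1)$-tightly connected subgraph downstairs, which by induction is $(k-1)$-partite; lifting that partition back (together with a direction bit from $\delta$) forces the upstairs component to be $k$-partite. Meanwhile any blue $K_n^{(k)}$ upstairs yields, after the standard "monotone path / pigeonhole on $\delta$-values" argument, a blue $K_{\Omega(\log n)}^{(k-1)}$ downstairs, so $n \le $ roughly $2^{O(b)}$, i.e. the blue clique number upstairs is at most $\exp(O(b))$. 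Combining the recursions $N \approx 2^{cM}$ for vertex counts and $b_{\text{up}} \le \exp(O(b_{\text{down}}))$ for clique numbers, iterating from the base case yields the tower bound $t_s(n^{c})$.

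\textbf{Reduction to tightly connected $H$.} Since the hypothesis only asserts $s$-tight connectivity and non-$k$-partiteness, I first reduce to the case where the construction must avoid an explicit minimal obstruction. As in the proof of \cref{thm:tight}, non-$k$-partiteness of an $s$-tightly connected $H$ is witnessed by a short $s$-tight odd "cycle" (a minimal $s$-tightly connected non-$k$-partite sub-configuration on $O_k(1)$ vertices); it suffices to show the stepped-up coloring contains no red copy of any such bounded-size obstruction, and every red $s$-tightly connected subgraph of the coloring being $k$-partite does exactly this. So the theorem follows once the construction produces, for every $N$, a coloring of $K_N^{(k)}$ in which every red $s$-tightly connected subgraph is $k$-partite and the blue clique number is $t_s^{-1}(N)^{O(1)}$-small, and then one checks $r(H, K_n^{(k)})$ exceeds the threshold at which the union bound / explicit construction still works.

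\textbf{Main obstacle.} The delicate point is maintaining the exact bookkeeping across the stepping-up recursion so that tightness drops by precisely one each time while the uniformity stays at $k$ — the classical stepping-up for the diagonal Ramsey number drops uniformity, not tightness, so one needs the "color by $\delta$-values but keep all $k$ vertices" variant and a correct parity/ordering gadget ensuring that a red edge upstairs genuinely corresponds to a red edge of the same color class downstairs on $k-1$ of its $\delta$-coordinates, with the extra coordinate carrying only orientation data. Verifying that $s$ shared vertices upstairs forces $s-1$ shared $\delta$-values downstairs (and not fewer, which would be useless, nor a spurious $s$, which would fail to reduce tightness) is the crux; this is where the proof of \cref{thm:tight}'s trifference-code idea must be generalized, replacing the single "agreement coordinate" with a hierarchical structure. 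I expect the parity correction and the precise definition of the ordering on $\{0,1\}^M$ induced by $\delta$ to require the most care, and the blue-clique analysis (a nested monotone-path argument) to be technically heavy but ultimately routine given the literature.
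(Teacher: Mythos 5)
The statement you are trying to prove is \cref{conj:higher-uniformities}, which the paper states as an open conjecture and explicitly describes as ``likely difficult''; the paper proves only the base case $s=2$ (\cref{thm:higher-uniformities}) and observes that the family version \cref{conj:higher-uniformities-family} would determine the tower height of the diagonal Ramsey numbers $r(K_n^{(s)}, K_n^{(s)})$, ``perhaps the central open problem in hypergraph Ramsey theory.'' Your proposal is therefore not a proof but a plan, and the plan's load-bearing step --- the ``stepping-up step'' that trades one level of tightness for one extra exponential --- is exactly the step that nobody knows how to carry out. The classical Erd\H{o}s--Hajnal--Rado stepping-up lemma raises uniformity and preserves \emph{clique} structure; there is no known variant that controls the partiteness of red $s$-tightly connected subgraphs upstairs in terms of $(s-1)$-tightly connected subgraphs downstairs. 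Your ``key local claim'' (that $s$ shared vertices upstairs force exactly $s-1$ shared $\delta$-values downstairs, and that a $(k-1)$-partition downstairs lifts to a $k$-partition upstairs) is asserted, not proved, and you yourself flag it as the crux; asserting the crux is a genuine gap, not a technicality. Declaring the blue-clique analysis ``ultimately routine given the literature'' is also not justified: the reason the tower height of $r(K_n^{(3)},K_n^{(3)})$ is open is precisely that the stepping-up machinery breaks down at the bottom of such recursions.

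There is also a quantitative error in the bookkeeping even granting all structural claims. In the classical stepping-up lemma a monochromatic clique of size $n$ upstairs yields one of size $\Omega(n)$ downstairs, so the clique number is roughly preserved ($n = O(b)$) while the vertex count is exponentiated; the gain in the Ramsey number comes entirely from $N = 2^M$. You instead extract only a blue $K_{\Omega(\log n)}^{(k-1)}$ downstairs, i.e.\ $n \le \exp(O(b))$. Running your recursion with that relation gives $N \ge t_s\bigl((\log n)^{c}\bigr)$, not $t_s(n^{c})$; already for $s=2$ this is $2^{(\log n)^{c}}$, which is quasi-polynomial in $n$ and far below the conjectured $2^{n^{c}}$. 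So even as a blueprint the recursion must be repaired to preserve the clique number linearly across each step, in the style of \cite{CFS, MRa}, before the inductive bound has the claimed form.
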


One natural way to prove \cref{conj:higher-uniformities} would be to consider the growth of the Ramsey number $r(\cH_s^{(k)}, K_n^{(k)})$, where $\cH_s^{(k)}$ is the family of $s$-tightly connected $k$-graphs that are not $k$-partite. We can make an even stronger conjecture than \cref{conj:higher-uniformities}.

\begin{conj}\label{conj:higher-uniformities-family}
   If $k > s$, then there exists a positive constant $c$ such that $r(\cH_s^{(k)}, K_n^{(k)})\geq t_s(n^c)$.
\end{conj}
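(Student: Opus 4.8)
The plan is to prove \cref{conj:higher-uniformities-family} by induction, reducing the pair $(k,s)$ step by step to the base case $(k-s+2,\,2)$ by means of a uniformity-raising ``stepping-up'' transformation. Concretely, I would establish the following: if there is a red/blue colouring of $K_M^{(k')}$ in which every red $s'$-tight component is $k'$-partite and the largest blue clique has order $m$, then there is a red/blue colouring of $K_N^{(k'+1)}$, with $N = 2^M$, in which every red $(s'+1)$-tight component is $(k'+1)$-partite and the largest blue clique has order $m+O(1)$. Given this, one starts from the colouring of $K_M^{(k-s+2)}$ produced by extending the single-construction argument of \cref{thm:all-tight} to $k$-graphs (using a $k$-symbol analogue of the trifference code): this yields, for every $M$, a colouring of $K_M^{(k')}$ with $k' = k-s+2 \ge 3$ in which every red $2$-tight component is $k'$-partite and the blue clique number is $\mathrm{polylog}(M)$, so that $r(\cH_2^{(k')},K_n^{(k')}) \ge 2^{n^{\gamma}} = t_2(n^{\gamma})$ for some $\gamma = \gamma(k')>0$. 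Applying the stepping-up transformation $s-2$ times, each step incrementing both the uniformity and the tight-level, inserting one more exponential, and changing the blue clique number only by $O(1)$, one arrives at a colouring of $K_N^{(k)}$ with $N = t_s(n^{c})$ having no red member of $\cH_s^{(k)}$ and no blue $K_n^{(k)}$ --- which is exactly the desired bound $r(\cH_s^{(k)},K_n^{(k)}) \ge t_s(n^{c})$.

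For the stepping-up step I would use the classical Erd\H{o}s--Hajnal construction. Identify the vertex set of $K_N^{(k'+1)}$ with $\{0,1\}^M$ with the lexicographic order, and for $x\ne y$ let $\delta(x,y)\in[M]$ be the most significant coordinate in which $x$ and $y$ differ. Given $x^{(1)}<\dots<x^{(k'+1)}$, write $d_i = \delta(x^{(i)},x^{(i+1)})$ for the $k'$ ``gaps'', and recall the standard facts that $\delta(x^{(i)},x^{(j)}) = \max_{i\le l < j} d_l$ and that the sign pattern of $d_1,\dots,d_{k'}$ determines a well-behaved ``orientation''. Declare the $(k'+1)$-set red if and only if the gaps $d_1,\dots,d_{k'}$ are pairwise distinct, their orientation is of a prescribed admissible type (e.g.\ strictly monotone), and the $k'$-set $\{d_1,\dots,d_{k'}\}\subseteq[M]$ is red under the given colouring of $K_M^{(k')}$; otherwise declare it blue. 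An admissibly oriented gap sequence is exactly what is needed both to transfer $(s'+1)$-tight adjacency downstairs to $s'$-tight adjacency upstairs and to promote a proper $k'$-colouring of the relevant coordinates to a proper $(k'+1)$-colouring of the downstairs vertices.

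Two things must then be verified. The blue-clique bound is essentially the content of the usual stepping-up lemma: a blue clique of size $p$ downstairs forces a blue clique of size $p-O(1)$ upstairs, so over the $s-2$ steps the clique number grows only by $O(1)$, which is absorbed into $c$. The new content is the red structure. If two red edges $X$ and $Y$ downstairs share vertices $z_1<\dots<z_r$ with $r\ge s'+1$, then for each $j$ the value $\delta(z_j,z_{j+1})$ is, by the admissibility of the orientation, equal to a single gap of $X$ and also to a single gap of $Y$ (for strictly monotone gaps it is the first or last gap of the corresponding block), so the gap sets of $X$ and $Y$ share at least $s'$ elements. Hence the image of a red $(s'+1)$-tight component $\cC$ under $X\mapsto\{d_1,\dots,d_{k'}\}$ lies in a single red $s'$-tight component upstairs, which by hypothesis is $k'$-partite, say via a proper colouring $h$ of its vertices by $[k']$. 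One then lifts $h$ to a proper $(k'+1)$-colouring of the downstairs vertices of $\cC$, assigning to each $x$ a colour in $[k'+1]$ determined by $h$ evaluated at the gap immediately below (or above) $x$ inside the edges of $\cC$ through $x$, with the choice dictated by the orientation; the admissibility condition is exactly what makes this assignment well defined and makes every red edge of $\cC$ rainbow, so that $\cC$ is $(k'+1)$-partite.

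The main obstacle is pinning down the orientation condition so that, simultaneously, (i) red edges remain abundant enough --- a positive proportion of the $(k'+1)$-sets with a red gap set should be red --- for the blue-clique argument to survive with only $O(1)$ loss per step, (ii) the overlap of two red edges of a tight component is structured enough that the gap-set correspondence, and hence the component correspondence, goes through, and (iii) the lifted colouring is well defined (independent of which edge of $\cC$ through $x$ is used) and rainbow on every red edge. Balancing (i) against (ii)--(iii) is delicate, since strengthening the admissibility condition to secure the structural claims tends to thin out the red edges; this is the crux of the difficulty and presumably why the conjecture is hard. A secondary, mechanical point is to check that the $\Omega(1)$ exponent and the $O(1)$ clique loss indeed survive the $s-2$ compositions, which is routine bookkeeping once the transformation is in hand.
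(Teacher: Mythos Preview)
This statement is posed in the paper as an open \emph{conjecture}; the paper proves it only for $s=2$ and explicitly notes that the general case would pin down the tower height of the diagonal numbers $r(K_n^{(s)},K_n^{(s)})$ --- the central open problem in hypergraph Ramsey theory. Any claimed proof must therefore be measured against that benchmark.

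Your plan has a decisive gap at the blue-clique step. You write that ``a blue clique of size $p$ [in the stepped-up colouring] forces a blue clique of size $p-O(1)$ [in the base colouring]'' by ``the usual stepping-up lemma''. But the classical lemma achieves $O(1)$ loss only because it distributes the non-monotone gap patterns between \emph{both} colours (according to whether the first local extremum is a maximum or a minimum); in your scheme every $(k'+1)$-set whose gap sequence is non-monotone or has a repeat is blue. That is fatal. Let $\{a_1,\dots,a_m\}\subseteq[M]$ be any blue $K_m^{(k')}$ in the base colouring and let $S\subseteq\{0,1\}^M$ be the $2^m$ vectors supported on $\{a_1,\dots,a_m\}$. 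Every $(k'+1)$-subset of $S$ has all its gaps in $\{a_1,\dots,a_m\}$: if the gaps repeat or are non-monotone the edge is blue by your admissibility rule, and if they are distinct and monotone the gap set is a $k'$-subset of the blue clique, hence a blue edge upstairs, so the edge is again blue. Thus $S$ is a blue $K_{2^m}^{(k'+1)}$, and the blue-clique number at least exponentiates at each step --- exactly cancelling the vertex gain $M\mapsto 2^M$, so no tower emerges.

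This is not an artefact of the particular rule ``monotone''. Any variant in which a red $(k'+1)$-edge is required to have a red gap set upstairs --- and some such requirement is precisely what powers your red structural argument --- suffers the same $2^m$ blow-up via the supported-vector construction above. This is essentially the paper's own inequality $r(K_n^{(s)},K_n^{(s)})\ge r(\cH_s^{(k)},K_n^{(k)})$ read in reverse: a proof of the conjecture along stepping-up lines would have to contain, somewhere, a proof of the diagonal tower bound, and nothing in your outline supplies one. The well-definedness of the lifted $(k'+1)$-colouring, which you rightly flag as delicate, is a second genuine issue, but the blue side already breaks the induction.
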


Our methods prove this conjecture for $s=2$ and  
\cref{conj:higher-uniformities} follows immediately from \cref{conj:higher-uniformities-family} as $r(H, K_n^{(k)})\geq r(\cH_s^{(k)}, K_n^{(k)})$ for any $H\in \cH_s^{(k)}$.
We note that all the proofs in this paper followed this strategy.
Moreover, we observe that \cref{conj:higher-uniformities-family}, if true in general, would also determine the tower height of diagonal hypergraph Ramsey numbers, perhaps the central open problem in hypergraph Ramsey theory. Indeed, $r(K_n^{(s)}, K_n^{(s)})\geq r(\cH_s^{(k)}, K_n^{(k)})$ for any positive integers $s,k,n$ with $k>s$, so we would have that the tower height of $r(K_n^{(s)}, K_n^{(s)})$ is at least $s-1$, agreeing with the longstanding upper bound. 

To see the inequality $r(K_n^{(s)}, K_n^{(s)})\geq r(\cH_s^{(k)}, K_n^{(k)})$, suppose for the sake of contradiction that there is a red/blue-coloring of the edges of $K_N^{(k)}$ with $N = r(K_n^{(s)}, K_n^{(s)})$ that avoids any red $H$ in $\cH_s^{(k)}$.
Label the $k$ parts of each of the red $s$-tight components from $1$ to $k$.
Consider the auxiliary coloring of the edges of $K_N^{(s)}$ where an edge $e$ is red if it is contained in a red edge $\Tilde{e}$ in the original coloring and $e$ contains precisely the vertices in $\Tilde{e}$ from parts $1,2,\ldots,s$ in the red $s$-tight component of $\Tilde{e}$; $e$ is blue otherwise.
Since $\abs{e}=s$, any red $\Tilde{e}$ containing $e$ belongs to the same red $s$-tight component, showing that the color of $e$ does not depend on the choice of $\Tilde{e}$. 
It is clear that each red $\Tilde{e}$ in $K_N^{(k)}$ contains exactly one red $e$ in the auxiliary coloring of $K_N^{(s)}$.
By the definition of $r(K_n^{(s)}, K_n^{(s)})$, this auxiliary coloring contains either a red $K_n^{(s)}$ or a blue $K_n^{(s)}$.
We will show that either one would give a blue $K_n^{(k)}$ in the original coloring. 
Indeed, suppose that $S$ is the set of $n$ vertices that forms a monochromatic $K_n^{(s)}$ in the auxiliary coloring. Then each $\Tilde{e}\in \binom{S}{k}$ contains either $0$ or $\binom{k}{s}>1$ red edges $e$, so it has to be blue.
This yields the required contradiction and so $r(K_n^{(s)}, K_n^{(s)})\geq r(\cH_s^{(k)}, K_n^{(k)})$ must hold.

\paragraph{Acknowledgments.} We are grateful to Jiaxi Nie, Maya Sankar and Yuval Wigderson for stimulating conversations. We are also grateful to the anonymous reviewers for several helpful remarks.


\begin{thebibliography}{99}

\bibitem{AKS}
M. Ajtai, J. Koml\'os and E. Szemer\'edi, A note on Ramsey numbers, {\it J. Combin. Theory Ser. A} {\bf 29} (1980), 354--360.

\bibitem{AlSp} N. Alon and J. H. Spencer, \textbf{The Probabilistic Method}, Fourth edition, Wiley Series in Discrete Mathematics and Optimization, John Wiley \& Sons, Inc., Hoboken, NJ, 2016.
 
\bibitem{BK}  T. Bohman and P. Keevash, The early evolution of the $H$-free process, {\it Invent. Math.} {\bf 181} (2010), 291--336.

\bibitem{CLRZ00} Y. Caro, Y. Li, C. C. Rousseau and Y. Zhang, Asymptotic bounds for some bipartite graph: complete graph Ramsey numbers, {\it Discrete Math.} {\bf 220} (2000), 51--56.

\bibitem{CAIM} D. Conlon, Problem asked at AIM Workshop on Graph Ramsey Theory, webpage available at \href{http://aimpl.org/graphramsey/1/}{http://aimpl.org/graphramsey/1/}, 2015.

\bibitem{CFG+} D. Conlon, J. Fox, B. Gunby, X. He, D. Mubayi, A. Suk and J. Verstra\"ete, On off-diagonal hypergraph Ramsey numbers, preprint available at arXiv:2404.02021 [math.CO].

\bibitem{CFHMSV} D. Conlon, J. Fox, X. He, D. Mubayi, A. Suk and J. Verstra\"ete, Hypergraph Ramsey numbers of cliques versus stars, {\it Random Structures Algorithms} {\bf 63} (2023), 610--623.

\bibitem{CFS} D. Conlon, J. Fox and B. Sudakov, Hypergraph Ramsey numbers, {\it J. Amer. Math. Soc.} {\bf 23}
(2010), 247\textendash266.

\bibitem{CMMV} D. Conlon, S. Mattheus, D. Mubayi and J. Verstra\"ete, Ramsey numbers and the Zarankiewicz problem, {\it Bull. London Math. Soc.} {\bf 56} (2024), 2014--2023.

\bibitem{EH} P. Erd\H os and A. Hajnal, On Ramsey like theorems. Problems and results, in Combinatorics (Proc. Conf. Combinatorial Math., Math. Inst., Oxford, 1972), 123--140, Inst. Math. Appl., Southend-on-Sea, 1972.

\bibitem{FH} J. Fox and X. He, Independent sets in hypergraphs with a forbidden link, {\it Proc. London Math. Soc.} {\bf 123} (2021), 384--409.

\bibitem{Ki} J. H. Kim, The Ramsey number $R(3, t)$ has order of magnitude $t^2/\log t$, {\it Random Structures Algorithms} {\bf 7} (1995), 173\textendash207.

\bibitem{MaV} S. Mattheus and J. Verstra\"ete, The asymptotics of  $r(4,t)$, {\it Annals of Math.} {\bf 199} (2024), 919--941.

\bibitem{Mu} D. Mubayi, Improved bounds for the Ramsey number of tight cycles versus cliques, {\it Combin. Probab. Comput.} {\bf 25} (2016), 791--796.

\bibitem{MRa} D. Mubayi and A. Razborov, Polynomial to exponential transition in Ramsey theory, {\it Proc. London Math. Soc.} {\bf 122} (2021), 69\textendash92.
 
\bibitem{MV} D. Mubayi and J. Verstra\"ete, A note on pseudorandom Ramsey graphs, \emph{J. Eur. Math. Soc.} {\bf 26} (2024), 153--161.



\end{thebibliography}
\end{document}